\theoremstyle{plain}
\newtheorem{thm}{Theorem}[section]
\theoremstyle{plain}
\newtheorem{lem}[thm]{Lemma}
\newtheorem{prop}[thm]{Proposition}
\theoremstyle{definition}
\numberwithin{equation}{section}
\title[Hedging spread Options]
 {On the sensitivity analysis of spread options using Malliavin calculus}
\date{\today}
\begin{document}
\author{ Farai Julius Mhlanga }

\address{Department of Mathematics and Applied Mathematics,
University of Limpopo, Private bag X1106, Sovenga, 0727, South Africa}

\email{farai.mhlanga@ul.ac.za}

\author{ Shadrack Makwena Kgomo }

\address{Department of Mathematics and Applied Mathematics
, University of Limpopo, Private bag X1106, Sovenga, 0727, South Africa}

\email{shadrack.kgomo@ul.ac.za}

%\end{keywords}
\keywords{Spread options, Malliavin derivatives, Price sensitivities }

%\date{June 15, 2008 and, in revised form, , }

%\dedicatory{}

%\commby{Daniel J. Rudolph}

%%% ----------------------------------------------------------------------

\begin{abstract}
In this paper we derive tractable formulae for price sensitivities of two-dimensional spread options using Malliavin calculus. In particular, we consider spread options with asset dynamics driven by geometric Brownian motion and stochastic volatility models. Unlike the fast Fourier transform approach, the Malliavin calculus approach does not require the joint characteristic function of underlying assets to be known and is applicable to spread options with discontinuous payoff functions. The results obtained reveal that the Malliavin calculus approach gives the price sensitivities in terms of the expectation of spread option payoff functional multiplied with some random variables (Malliavin weights) which are independent of the payoff functional. This is consistent with results in Fourni$\acute{\text{e}}$ et al. \cite{Fo}. The results also show the flexibility of Mallavin calculus approach when applied to spread options.

\end{abstract}

%%% ----------------------------------------------------------------------
\maketitle
{\em Keywords}: Malliavin calculus; Spread option; 
 Model parameters; Price sensitivities

{\em Classifications}: 60H07, 60G05, 60J60
%%% ----------------------------------------------------------------------
\section{Introduction}
A spread option is an option whose payoff is based on the difference (i.e. the spread) between two or more underlying assets. Spread options are common in several markets such as the fixed-income, currency, commodity, and energy markets \cite{Alf, Bel}. For instance, in the fixed income markets, a popular product in the United States of America is the Note Against Bond (NOB) spread in which a yield curve is created between the 30-year bond futures contract (long position) and the 10-year US Treasury note  futures (short position). In the commodity markets, spread options are based on the difference between the prices of the same commodity at two different locations or at two different points in time, as well as between the prices of different grades of the same commodity \cite{Carmona}. In the energy markets, crack spread options and spark spread options are prevalent. The crack spread is based on the differential between the price of crude oil and refined petroleum products. The spread represents the refinement margin made by the oil refinery by ``cracking" the crude oil into a refined petroleum product \cite{Eydeland}. The spark spread refers to differences between the price of electricity and the price of fuel, and are are widely used by power plant operators to optimize their revenue streams \cite{Hurd}. Spread options are sometimes traded on exchanges, but most often as  over-the-counter transactions.

Spread options are popular because they are designed to mitigate adverse movements between two market variables \cite{Bel, Errera}. For instance, the crack spread are used by refineries to hedge against price fluctuations, mitigate risk, or secure a profit margin on the production output \cite{Bel}. The spark spread represents the margin of the power plant, which takes fuel to run its generator to produce electricity. However, spread options are complex contracts as they take two underlying assets as the reference point in their payoff and they are harder to value compared to plain vanilla call and puts due to the 2-dimensionality \cite{Bel, Gui}.  Nonetheless, these options represent interesting alternatives for those seeking coverage of positions in several assets.

The sensitivity analysis is carried over parameters appearing in the model for the price dynamics. Price sensitivities are derivatives (in the classical sense) of the price of spread options with respect to parameters of the model. For example, \emph{Delta}, denoted by $\Delta_i$, $i=1,2,$ is the derivative of the price of spread option with respect to the initial price of the underlying asset. \emph{Gamma}, denoted by $\Gamma_i$, $i=1,2,$ is the second derivative of the price of spread option with respect to the initial price of the underlying assets. \emph{Vega}, denoted by $\mathcal V_i$, $i=1,2,$ is the derivative of the price of spread option with respect to the volatility of the underlying assets \cite{Mh}. For spread options, two values are obtained for each which are associated with the price of the underlying assets under consideration.

The derivation of price sensitivities for spread options is a challenging task due to lack of closed formulae for their prices. Analytical methods applicable to log-normal models that involve linear approximations of the nonlinear exercise boundary has been used. Kirk \cite{Kik} presented an analytical approximation which is a generalization of Margrabe's formula \cite{Mag} for an exchange option, that is, a spread option with zero-strike price. Kirk's formula performs well in practice. Carmona and Durrleman \cite{Carmona} and later Li et al. \cite{Li} derive a number of lower and upper bounds for the spread option price that combine to produce accurate analytical approximation formulas in log-normal asset models. These results were used to approximate values of price sensitivities via direct differentiation. Alfeus and Schl$\ddot{\text{o}}$gl \cite{Alf} calculated the change in the spread option value with respect to change in volatility parameters of each asset assuming that the model (joint) characteristic function is known, on price obtained using fast Fourier transform approach. Hurd and Zhou \cite{Hurd} and Dempster and Hong \cite{Dem} also explored the use of fast Fourier transform in three different models for spread options on two stocks, namely the geometric Brownian motion, stochastic volatility model and the variance gamma model. A formula for calculating the \emph{vega} (sensitivity to volatility) was presented using direct differentiation.

Over the past two decades, a fairly large and growing literature have been developed around the computation of price sensitivities using Malliavin calculus. Fourni$\acute{\text{e}}$ et al. \cite{Fo} introduce the application of Malliavin calculus to the computation of price sensitivities on markets driven by the Brownian motion only. Their work was further extended by several authors. El-Khatib and Privault \cite{El} use Malliavin calculus on Poisson space to derive price sensitivities in a market driven by the Poisson processes. Davis and Johansson \cite{Joh} utilise Malliavin calculus to calculate price sensitivities in a jump diffusion setting assuming a separability condition. Petrou \cite{Pet} derived price sensitivities using Malliavin calculus for markets driven by square integrable L\'evy processes. Both Malliavin calculus and Fourier transform were used by Benth et al. \cite{Ben} to compute price sensitivities within a jump-diffusion framework. Mhlanga \cite{Mh1} makes use of Malliavin calculus to compute price sensitivities where small jumps for L\'evy processes were approximated by a Brownian motion. Yilmaz \cite{Yil} uses Malliavin calculus approach to compute price sensitivities for underlying asset and interest rate involving stochastic volatility and stochastic interest rate models, respectively. Kawai and Kohatsu-Higa \cite{Kaw} use  Malliavin calculus to obtain expressions for price sensitivities for an asset price dynamics model defined with time-changed Brownian motion. In all these references the Malliavin calculus approach was not applied to spread options. The Malliavin calculus has a derivation property and its adjoint coincides with the It$\hat{\text{o}}$ stochastic integral  on adapted processes, which provides a natural way to make explicit computations of weights.

The purpose of this paper is to derive closed-form formulae for price sensitivities of two-dimensional spread options using Malliavin calculus. The paper focuses on asset dynamics that are driven by geometric Brownian motion and stochastic volatility models. The general L\'evy models are left for further studies. The fast Fourier transform, which has been applied in most papers dealing with spread options, requires that the joint characteristic function of the underlying assets be known in advance. In practice, the joint characteristic function may not be available, for example, in spread options of Asian-type, necessitating the need for alternative approaches. The Malliavin calculus adopted in the present paper does not require the joint characteristic function to be known and is applicable to spread options with discontinuous payoff functions. This demonstrates the flexibility of Malliavin calculus approach. The contribution of this paper is to provide tractable formulae for price sensitivities of spread options in the context of Malliavin calculus. In particular, we provide formulae for the \emph{Delta}, \emph{Gamma} and \emph{Vega} with respect to each of the underlying asset prices. In passing, we generalize the calculation of price sensitivities of spread options proposed in Carmona and Durrleman \cite{Carmona}, Alfeus and Schl$\ddot{\text{o}}$gl \cite{Alf},  Hurd and Zhou \cite{Hurd}, and Li et al. \cite{Li}. We also discuss the localised Malliavin calculus which improves the Malliavin calculus. The localised Malliavin calculus approach use Malliavin calculus approach only around the point of discontinuity, and direct methods outside. Our results reveal that the Malliavin calculus approach gives the price sensitivities in terms of the expectation of spread option payoff functional multiplied with some Malliavin weights which are independent of the payoff functional. This is consistent with results in Fourni$\acute{\text{e}}$ et al. \cite{Fo}. Our results gain importance in view of the application of Malliavin calculus for the computation of price sensitivities of spread options in mathematical finance.

The paper is structured as follows. In Section 2, we describe the spread option pricing problem. Section 3 is devoted to a discussion on Malliavin calculus. We provide an overview of necessary tools needed in our proofs. Section 4 specify the market setting considered in this paper. In Section 5, we present theoretical formulae for computing the price sensitivities. Examples are provided in Section 6. The spread option with stochastic volatility is presented in Section 7. Section 8 deals with the localization of Malliavin calculus when computing price sensitives of spread options. Section 9 concludes this paper.

\section{Mathematical setup}
For a fixed $T$, we consider a probability space $(\Omega,{\mathcal F},\{\mathcal F\}_{0\leq t\leq T},P)$ defined in the usual sense. Consider a spread option of European call type between two stock price processes $S_1=(S_1(t))_{0\leq t\leq T}$ and $S_2=(S_2(t))_{0\leq t\leq T}$ with maturity time $T$ and exercise price $K$. Its payoff at time $T$ is given by
\[ (S_2(T)-S_1(T)-K)^+\]
where $(\cdot)^+=\max\{\cdot,0\}$. At maturity, if the spread $S_2(T)-S_1(T)$ is greater than the exercise price $K$, the option holder exercises the option and gains the difference between the spread and the strike price. If the spread is less than $0$, the option holder does not exercise the option, and the payoff is $0$. The price of the spread option $u$ at time $t=0$ is expressed by the risk-neutral expectation
\begin{equation}\label{fa2}
u=\mathbb E[e^{-rT}\left(S_2(T)-S_1(T)-K\right)^+]
\end{equation}
where $r$ is the risk-free interest rate, which is here assumed to be constant.  \\
Define $\Phi(S_1(T),S_2(T)):=\left(S_2(T)-S_1(T)-K\right)^+$. Then (\ref{fa2}) can be expressed as follows
\begin{equation}\label{fa3}
u=\mathbb E[e^{-rT}\Phi(S_1(T),S_2(T))].
\end{equation}
Equation (\ref{fa3}) shows the price of the spread option with payoff function being a function of terminal values of two assets $S_1(T)$ and $S_2(T)$.
\section{A Primer on Malliavin calculus}
In this section we recall some of the basic properties of Malliavin calculus as highlighted in Fourni$\acute{\text{e}}$ et al. \cite{Fo} and Mhlanga \cite{Mh}. We refer to Nualart \cite{N} for a detailed exposition on Malliavin calculus.\\
For $h(\cdot)\in H=L^2([0,T],\mathbb R^d)$, denote by $W(h)$ the Wiener stochastic integral $\int_0^Th(t)dW_t$. Let ${\mathcal S}$ denote the class of random variables of the form
\[
F=f(W(h_1),...,W(h_n))
\]
where $f\in C_p^{\infty}(\mathbb R^n)$, $(h_1,...,h_n)\in H^n$ and $n\geq 1$.  For $F\in {\mathcal S}$, we define the Malliavin derivative $DF=\left(D_tF\right)_{t\in[0,T]}$ of $F$ as the $H$-valued random variable given by
\begin{equation}
\label{a}
 DF = \sum_{i=1}^n \frac{\partial f}{\partial x_i}(W(h_1),...,W(h_n))h_i(t).
\end{equation}
For any $p\geq1$, the Malliavin derivative operator $D$ is closable from $L^p(\Omega)$ to $L^p(\Omega;H)$. Its domain is denoted by $\mathbb D^{1,p}$ with respect to the norm
\[\parallel F\parallel_{1,p}=\left(\mathbb E[\mid F\mid^p]+\mathbb E[\parallel DF\parallel_{H}^p]\right)^{\frac{1}{p}}. \]
We also introduce $\delta$, the Skorohod integral, defined as the adjoint operator of $D$ which is a linear operator on $L^2([0,T]\times\Omega,\mathbb R^d)$ with values in $L^2(\Omega)$ and we denote by $\text{Dom}(\delta)$ its domain.\\
We now state the basic properties of the Malliavin derivative and the Skorohod integral.
\begin{prop}[Chain rule property]\label{Cha} Fix $p\geq1$. For $\varphi\in C_b^1(\mathbb R^n,\mathbb R)$ and $F=(F_1,...,F_n)$ a random vector whose components belong to $\mathbb D^{1,p}$, $\varphi(F)\in\mathbb D^{1,p}$ and for $t\in[0,T]$, one has
\begin{equation}
\label{c}
D_t\varphi(F)= \sum_{i=1}^n\frac{\partial \varphi}{\partial x_i}(F)D_tF_i.
\end{equation}
\end{prop}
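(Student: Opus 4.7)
The plan is to establish the result first for the dense subclass of smooth cylindrical random variables by a direct computation from the definition (\ref{a}), and then extend to the whole of $\mathbb{D}^{1,p}$ by an approximation argument that exploits the closability of $D$.

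First I would treat the case where each component $F_i$ belongs to $\mathcal{S}$. By enlarging the family $\{h_1,\ldots,h_m\}$ if necessary, I can represent every $F_i$ as $F_i = f_i(W(h_1),\ldots,W(h_m))$ for one common finite collection of elements of $H$ and smooth functions $f_i \in C_p^\infty(\mathbb{R}^m)$. Then $\varphi(F) = (\varphi\circ f)(W(h_1),\ldots,W(h_m))$ is itself a smooth cylindrical random variable, and the classical chain rule for $C^1$ functions applied inside the definition (\ref{a}) yields
\begin{equation*}
D_t \varphi(F) = \sum_{j=1}^{m} \frac{\partial (\varphi \circ f)}{\partial x_j} h_j(t) = \sum_{i=1}^{n} \frac{\partial \varphi}{\partial x_i}(F)\sum_{j=1}^{m} \frac{\partial f_i}{\partial x_j} h_j(t) = \sum_{i=1}^{n} \frac{\partial \varphi}{\partial x_i}(F)\, D_t F_i.
\end{equation*}

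Next I would pass to the general case by density. For each $i$, choose a sequence $F_i^{(k)} \in \mathcal{S}$ with $F_i^{(k)} \to F_i$ in $\mathbb{D}^{1,p}$, and write $F^{(k)} = (F_1^{(k)},\ldots,F_n^{(k)})$. By extracting a subsequence I may assume $F^{(k)} \to F$ almost surely. Since $\varphi$ and all its first partial derivatives are bounded (hence $\varphi$ is globally Lipschitz), the triangle inequality and dominated convergence give $\varphi(F^{(k)}) \to \varphi(F)$ in $L^p(\Omega)$. For the derivative side, applying the smooth-case formula to $F^{(k)}$ gives $D\varphi(F^{(k)}) = \sum_{i=1}^n \partial_i \varphi(F^{(k)}) \, DF_i^{(k)}$; the continuity and boundedness of $\partial_i\varphi$, combined with the convergence $DF_i^{(k)} \to DF_i$ in $L^p(\Omega;H)$, allow one to conclude by a standard splitting argument that this converges to $\sum_{i=1}^n \partial_i \varphi(F) \, DF_i$ in $L^p(\Omega;H)$. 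The closability of $D$ then identifies the limit as $D\varphi(F)$, proving simultaneously that $\varphi(F) \in \mathbb{D}^{1,p}$ and that formula (\ref{c}) holds.

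The main technical obstacle is the $L^p(\Omega;H)$ convergence of the products $\partial_i\varphi(F^{(k)}) \, DF_i^{(k)}$. Writing
\begin{equation*}
\partial_i\varphi(F^{(k)}) DF_i^{(k)} - \partial_i\varphi(F) DF_i = \partial_i\varphi(F^{(k)})(DF_i^{(k)} - DF_i) + \bigl(\partial_i\varphi(F^{(k)}) - \partial_i\varphi(F)\bigr)DF_i,
\end{equation*}
the first term is controlled in $L^p(\Omega;H)$ by the uniform bound on $\partial_i\varphi$ together with the convergence $DF_i^{(k)} \to DF_i$; the second term is handled by almost sure convergence of $\partial_i\varphi(F^{(k)})$ to $\partial_i\varphi(F)$ along the subsequence, the uniform bound on $\partial_i\varphi$, and dominated convergence with dominating random variable $2\|\partial_i\varphi\|_\infty \|DF_i\|_H \in L^p(\Omega)$. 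Everything else reduces to bookkeeping, so this boundedness hypothesis on the derivatives of $\varphi$ is what makes the extension clean.
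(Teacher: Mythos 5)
The paper never proves this proposition: it is recalled as standard background, with the reader referred to Nualart \cite{N}, so there is no in-paper argument to compare against. Your strategy is the canonical one --- verify the formula on smooth cylindrical random variables, then extend to all of $\mathbb{D}^{1,p}$ via density and the closability of $D$ --- and your extension step is carried out correctly: the decomposition of $\partial_i\varphi(F^{(k)})DF_i^{(k)}-\partial_i\varphi(F)DF_i$ into two terms, the uniform bound $\|\partial_i\varphi\|_\infty$, the passage to an a.s.\ convergent subsequence, the domination by $2\|\partial_i\varphi\|_\infty\|DF_i\|_H\in L^p(\Omega)$, and the final appeal to closability to identify the limit as $D\varphi(F)$ are all exactly what is needed.

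The one step that does not go through as written is the base case. The class $\mathcal{S}$ on which the Malliavin derivative is initially defined consists of random variables $f(W(h_1),\dots,W(h_m))$ with $f\in C_p^{\infty}(\mathbb{R}^m)$. If $\varphi$ is only $C_b^1$, then $\varphi\circ f$ is merely $C^1$, so $\varphi(F)$ is \emph{not} a smooth cylindrical random variable even when every $F_i\in\mathcal{S}$, and you cannot apply the defining formula for $D$ on $\mathcal S$ to it --- that formula simply does not cover $\varphi\circ f$. The standard repair is one further approximation layer: mollify, setting $\varphi_\epsilon=\varphi*\psi_\epsilon\in C_b^{\infty}$, so that $\varphi_\epsilon\circ f\in C_p^{\infty}$ and the computation you wrote is legitimate for $\varphi_\epsilon$; then let $\epsilon\to0$, using that $\varphi_\epsilon\to\varphi$ and $\nabla\varphi_\epsilon\to\nabla\varphi$ pointwise with $\|\nabla\varphi_\epsilon\|_\infty\le\|\nabla\varphi\|_\infty$, and invoke closability once more. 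The limiting estimates required are identical to the ones you already wrote for the density step, so the fix is routine; with it inserted, your proof is complete and coincides with the argument in Nualart \cite{N}.
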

\begin{prop}
Let $\{X_t,~t\geq0\}$ be an $\mathbb R^n$ valued It$\hat{\text{o}}$ process whose dynamics are governed by the stochastic differential equation
\begin{equation}
dX_t=b(X_t)dt+\sigma(X_t)dW_t,
\end{equation}
where $b$ and $\sigma$ are supposed to be continuously differentiable functionals with bounded derivatives and $\sigma(x)\neq0$ for all $x\in\mathbb R^n$. Let $\{Y_t,~t\geq0\}$ be the associated first variation process given by the stochastic differential equation
\begin{equation}
dY_t=b^{\prime}(X_t)Y_tdt+\sum_{i=1}^n\sigma_i^{\prime}(X_t)Y_tdW_t^i,~~~~Y_0=I_n,
\end{equation}
where $I_n$ is the identity matrix of $\mathbb R^n$, primes denote derivatives and $\sigma_i$ is the $i$-th column vector of $\sigma$. The the process $\{X_t,~t\geq0\}$ belongs to $\mathbb D^{1,2}$ and its Malliavin derivative is given by
 \begin{equation}
D_rX_t=Y_tY_r^{-1}\sigma(X_r)1_{\{r\leq t\}}, ~~r\geq0~~a.s.,
\end{equation}
which is equivalent to
\begin{equation}
Y_t=D_rX_t\sigma^{-1}(X_r)Y_r1_{\{r\leq t\}}~~~~~~a.s.
\end{equation}
\end{prop}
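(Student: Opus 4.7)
The plan is to proceed in two stages. First, I would construct $X_t$ as the limit of Picard approximations and show simultaneously that $X_t \in \mathbb{D}^{1,2}$, thereby obtaining an SDE satisfied by $D_r X_t$. Second, I would verify that the candidate expression $Y_t Y_r^{-1}\sigma(X_r)\mathbf{1}_{\{r\leq t\}}$ solves the same SDE (as a process in $t$) and appeal to pathwise uniqueness.

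For the first stage, set $X^{(0)}_t \equiv X_0$ and
$$X^{(n+1)}_t = X_0 + \int_0^t b(X^{(n)}_s)\,ds + \int_0^t \sigma(X^{(n)}_s)\,dW_s.$$
By induction, using the chain rule (Proposition \ref{Cha}) together with the commutation of $D_r$ with Lebesgue and It\^o integration on adapted integrands, each $X^{(n)}_t$ belongs to $\mathbb{D}^{1,2}$, and for $r\leq t$,
$$D_r X^{(n+1)}_t = \sigma(X^{(n)}_r) + \int_r^t b'(X^{(n)}_s) D_r X^{(n)}_s\,ds + \sum_{i=1}^n \int_r^t \sigma_i'(X^{(n)}_s) D_r X^{(n)}_s\,dW^i_s,$$
while adaptedness gives $D_r X^{(n+1)}_t = 0$ for $r > t$. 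Exploiting the boundedness of $b'$ and $\sigma'$, together with the Burkholder--Davis--Gundy and Gronwall inequalities, I would obtain a uniform bound $\sup_{n,t}\|X^{(n)}_t\|_{1,2} < \infty$ and convergence of $\{X^{(n)}_t\}$ in the $\|\cdot\|_{1,2}$-norm. Closability of $D$ then yields $X_t \in \mathbb{D}^{1,2}$ and, passing to the limit,
$$D_r X_t = \sigma(X_r) + \int_r^t b'(X_s) D_r X_s\,ds + \sum_{i=1}^n \int_r^t \sigma_i'(X_s) D_r X_s\,dW^i_s,\qquad r\leq t. \qquad (\star)$$

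For the second stage, fix $r$ and define $Z_t := Y_t Y_r^{-1}\sigma(X_r)$ for $t\geq r$. Since $Y_r Y_r^{-1}=I_n$, we have $Z_r = \sigma(X_r)$, and as $Y_r^{-1}\sigma(X_r)$ is $\mathcal{F}_r$-measurable, multiplying the SDE for $Y_t$ on the right by this $\mathcal{F}_r$-measurable matrix gives
$$dZ_t = b'(X_t) Z_t\,dt + \sum_{i=1}^n \sigma_i'(X_t) Z_t\,dW^i_t,$$
so in integral form $Z_t$ satisfies exactly the equation $(\star)$ with the same initial datum $\sigma(X_r)$ at time $r$. Because this linear SDE has bounded (hence Lipschitz) coefficients, pathwise uniqueness gives $D_r X_t = Y_t Y_r^{-1}\sigma(X_r)$ on $\{r\leq t\}$. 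Combined with $D_r X_t = 0$ for $r>t$, this yields the claimed formula, and the equivalent rearrangement $Y_t = D_r X_t\,\sigma^{-1}(X_r) Y_r$ follows from the invertibility of $\sigma(X_r)$, which is implicit in the hypothesis $\sigma(x)\neq 0$.

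The main obstacle will be the uniform $\mathbb{D}^{1,2}$-estimate in the first stage: one must simultaneously control $\mathbb{E}[|X^{(n+1)}_t - X^{(n)}_t|^2]$ and $\mathbb{E}[\int_0^T\|D_r X^{(n+1)}_t - D_r X^{(n)}_t\|^2\,dr]$ in order to identify the limit of $\{D_r X^{(n)}_t\}$ with a genuine element of $L^2([0,T]\times\Omega)$ rather than with a mere weak cluster point. The hypothesis that $b$ and $\sigma$ have bounded derivatives is essential here, supplying the Lipschitz constants that drive both nested Gronwall arguments.
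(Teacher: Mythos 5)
The paper does not actually prove this proposition: it is recalled in the preliminaries from the literature (Nualart; Fourni\'e et al.) without argument, and your two-stage scheme --- Picard iteration together with closability of $D$ to obtain $X_t\in\mathbb{D}^{1,2}$ and the linear SDE $(\star)$ for $D_rX_t$, followed by identification with $Y_tY_r^{-1}\sigma(X_r)$ via pathwise uniqueness for that linear equation --- is precisely the standard proof given in the cited source. The argument is sound; the only caveats are cosmetic: the index $n$ does double duty as the dimension and the Picard step, the chain rule must be invoked in its Lipschitz (not $C^1_b$) form since $b$ and $\sigma$ are unbounded, and the final rearrangement requires $\sigma(X_r)$ to be invertible, which (as you note) is what the paper's condition $\sigma(x)\neq 0$ is evidently meant to assert.
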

\begin{prop}[Integration by parts formula]\label{Int}
If $u$ belongs to $\text{Dom}(\delta)$, then $\delta(u)=\int_0^Tu_t\delta W_t$ is the element of $L^2(\Omega)$ characterised by the integration by parts formula
\begin{equation}
\label{f}
\forall F\in\mathbb D^{1,2}~~~~\mathbb{E}[F\delta(u)]=\mathbb{E}\left[\int_0^TD_tF\cdot u(t)dt \right].
\end{equation}
\end{prop}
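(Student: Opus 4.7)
The plan is to obtain the identity as an immediate consequence of the fact that, in the sentence preceding the proposition, $\delta$ has been declared to be the adjoint of the Malliavin derivative operator $D$; under this convention the proof is essentially a restatement of the definition.

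First I would record the Hilbert space setup: $D$ is a closed, densely defined linear operator from $L^2(\Omega)$ into $L^2([0,T]\times\Omega,\R^d)$ with domain $\mathbb{D}^{1,2}$, obtained as the closure of the smooth cylindrical variables $\mathcal S$ under the norm $\|\cdot\|_{1,2}$. By the standard theory of unbounded operators on Hilbert spaces, a process $u\in L^2([0,T]\times\Omega,\R^d)$ belongs to $\text{Dom}(\delta)$ precisely when the linear functional
\[
\mathbb{D}^{1,2}\ni F \longmapsto \E\!\left[\int_0^T D_tF\cdot u(t)\,dt\right]
\]
is continuous in the $L^2(\Omega)$-norm. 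In that case, the Riesz representation theorem furnishes a unique element of $L^2(\Omega)$, which is by definition $\delta(u)$, satisfying
\[
\E[F\,\delta(u)] = \E\!\left[\int_0^T D_tF\cdot u(t)\,dt\right]
\]
for every $F\in\mathbb{D}^{1,2}$. The symbol $\delta(u)=\int_0^T u_t\,\delta W_t$ is then introduced purely as suggestive notation for the Skorohod integral, which reduces to the It\^o stochastic integral on adapted integrands as noted in the introduction.

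The only substantive ingredient that cannot be read off directly from the definition is the closability of $D$ from $L^p(\Omega)$ to $L^p(\Omega;H)$, since without it the adjoint could not be defined unambiguously and the Riesz argument would fail. That closability was asserted without proof immediately before the proposition statement, and is the one genuine technical obstacle in the argument; its standard justification proceeds either through Wiener chaos expansions or through integration by parts against Gaussian densities on cylindrical sets, and may be invoked from the Nualart monograph cited in the text. Once closability is granted, no further computation is needed and the integration-by-parts formula is simply the statement that $\delta = D^{*}$.
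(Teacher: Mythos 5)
The paper offers no proof of this proposition: it is stated as one of the standard facts recalled from Nualart's monograph, with the duality relation \eqref{f} being, in effect, the definition of the Skorohod integral as the adjoint $\delta = D^{*}$. Your argument — identifying $\text{Dom}(\delta)$ via continuity of the functional $F \mapsto \mathbb{E}\bigl[\int_0^T D_tF\cdot u(t)\,dt\bigr]$, invoking Riesz representation together with density of $\mathbb{D}^{1,2}$ in $L^2(\Omega)$ for existence and uniqueness of $\delta(u)$, and isolating closability of $D$ as the one genuine technical ingredient — is correct and is precisely the justification the paper implicitly relies on.
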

\begin{prop}\label{Int1}
If $u$ is an adapted process belonging to $L^2([0,T]\times\Omega,\mathbb R^d)$ then the Skorohod integral and the It$\hat{\text{o}}$ integral coincide, that is,
\begin{equation}
\delta(u)=\int_0^Tu(t)dW(t).
\end{equation}
\end{prop}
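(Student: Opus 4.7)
The plan is to verify the identity first on elementary adapted processes and then extend by density, using the closability of $\delta$ and the It\^o isometry. First, I would take the building block $u(t) = F\mathbf{1}_{(a,b]}(t)$ with $0 \le a < b \le T$ and $F\in\mathbb{D}^{1,2}$ an $\mathcal{F}_a$-measurable random variable. Combining the duality formula (\ref{f}) of Proposition~\ref{Int} with the chain rule of Proposition~\ref{Cha}, one obtains the standard product rule
\[
\delta(Fh) \;=\; F\,\delta(h) \;-\; \int_0^T h(t)\, D_tF\, dt
\]
for deterministic $h \in L^2([0,T])$. Taking $h=\mathbf{1}_{(a,b]}$ gives $\delta(h)=W_b-W_a$, while the $\mathcal{F}_a$-measurability of $F$ forces $D_tF=0$ on $(a,b]$, so that $\delta(u)=F(W_b-W_a)=\int_0^T u(t)\,dW(t)$.

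Second, I would extend the identity by linearity to any simple adapted process $u(t)=\sum_{i=1}^n F_i\mathbf{1}_{(t_i,t_{i+1}]}(t)$ with $F_i\in\mathbb{D}^{1,2}$ and $\mathcal{F}_{t_i}$-measurable. Squaring and using the duality identity (\ref{f}) twice, together with the vanishing of the off-diagonal cross terms (which comes from adaptedness), produces the It\^o isometry for $\delta$ on adapted simple integrands,
\[
\mathbb{E}\!\left[\delta(u)^2\right] \;=\; \mathbb{E}\!\left[\int_0^T |u(t)|^2\, dt\right].
\]
This is the key quantitative estimate needed for the extension step.

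Finally, for a general adapted $u\in L^2([0,T]\times\Omega,\mathbb{R}^d)$, I would approximate $u$ in the $L^2([0,T]\times\Omega)$ norm by a sequence $(u_n)$ of simple adapted processes with $\mathbb{D}^{1,2}$ coefficients (this is dense in the space of adapted $L^2$ processes). The isometry above shows that $\{\delta(u_n)\}$ is Cauchy in $L^2(\Omega)$, so closability of $\delta$ places $u$ in $\mathrm{Dom}(\delta)$ and identifies the limit with $\delta(u)$. Simultaneously, $\int_0^T u_n(t)\,dW(t) \to \int_0^T u(t)\,dW(t)$ in $L^2(\Omega)$ by the classical It\^o isometry. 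Since the two sequences are termwise equal, their limits coincide. The main obstacle is the passage to the limit, because $\delta$ is defined only through duality and its domain is not immediately transparent; once the isometry on adapted simple processes is in hand, however, the closability of $\delta$ makes the extension routine, and this is precisely the route followed in Nualart~\cite{N}, already cited in the paper.
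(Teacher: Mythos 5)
Your argument is correct and is essentially the standard proof from Nualart's book: verify the identity on elementary adapted processes via the product rule and the fact that $D_tF=0$ for $t>a$ when $F$ is $\mathcal{F}_a$-measurable, establish the isometry on simple adapted integrands, and pass to the limit using the closedness of $\delta$. The paper itself offers no proof of this proposition --- it is stated as background with a pointer to Nualart \cite{N} --- so there is nothing to compare against beyond noting that your route is the canonical one; the only cosmetic remark is that the product rule you re-derive is already available as Proposition~\ref{Skro} of the paper, and the vanishing of the Malliavin derivative of an $\mathcal{F}_a$-measurable functional on $(a,T]$ is the one auxiliary lemma you invoke without proof.
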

\begin{prop}\label{Skro}
If $F\in\mathbb D^{1,2}$ and $u\in\text{Dom}(\delta)$ such that $\mathbb E[F^2\int_0^Tu_t^2dt]<\infty$, one has
\begin{equation}
\label{j}
\delta(Fu)=F\delta(u)-\int_0^TD_tF\cdot u_tdt
\end{equation}
whenever the right hand side belongs to $L^2(\Omega)$. In particular, if $u$ is in addition adapted, one simply has
\begin{equation}
 \delta(Fu)=F\int_0^Tu_tdW_t-\int_0^TD_tF_t\cdot u_tdt.
\end{equation}
\end{prop}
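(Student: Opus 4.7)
The plan is to establish the identity by testing against a dense class of random variables and invoking the duality relation of Proposition \ref{Int}. Specifically, since the Skorohod integral is defined as the adjoint of the Malliavin derivative, it suffices to show that for every smooth test random variable $G$ (say $G\in\mathcal{S}$, or more generally $G\in\mathbb{D}^{1,2}$) one has
\begin{equation*}
\mathbb{E}[G\,\delta(Fu)] \;=\; \mathbb{E}\!\left[G\!\left(F\delta(u)-\int_0^T D_tF\cdot u_t\,dt\right)\right].
\end{equation*}
Once this duality identity is verified, uniqueness of the adjoint yields the claimed formula, and the integrability hypothesis $\mathbb{E}[F^2\int_0^T u_t^2\,dt]<\infty$ guarantees that $Fu\in L^2([0,T]\times\Omega,\mathbb{R}^d)$ so that the left side makes sense in the required space.

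First I would apply Proposition \ref{Int} to the left-hand side, rewriting $\mathbb{E}[G\,\delta(Fu)]=\mathbb{E}[\int_0^T D_tG\cdot Fu_t\,dt]$. Next I would use the chain rule of Proposition \ref{Cha} applied to the product $GF$ (this is where one must first assume $F,G\in\mathcal{S}$ so that $GF\in\mathbb{D}^{1,2}$), obtaining $D_t(GF)=F\,D_tG+G\,D_tF$, and hence $F\,D_tG=D_t(GF)-G\,D_tF$. Substituting this and splitting the integral produces two terms; a second application of Proposition \ref{Int}, now in the reverse direction with the test variable $GF$ paired against $u\in\mathrm{Dom}(\delta)$, converts the first term into $\mathbb{E}[GF\delta(u)]$. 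The second term is already in the desired form $\mathbb{E}[G\int_0^T D_tF\cdot u_t\,dt]$, and combining them yields exactly the right-hand side.

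The main obstacle will be the closure/density step: the chain-rule manipulation is transparent on the smooth class $\mathcal{S}$, but to conclude for arbitrary $F\in\mathbb{D}^{1,2}$ and $u\in\mathrm{Dom}(\delta)$ one must approximate both $F$ and $u$ by smooth elements and pass to the limit in $L^2(\Omega)$, using the closability of $D$ and $\delta$ together with the integrability hypothesis that the right-hand side lies in $L^2(\Omega)$. One should check that the approximating sequences produce $L^1$-convergence in the duality pairing so that no boundary terms are lost.

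Finally, for the special case where $u$ is adapted, the formula is immediate from the first part combined with Proposition \ref{Int1}, which identifies $\delta(u)$ with the It\^o integral $\int_0^T u_t\,dW_t$; no additional argument is required beyond substitution.
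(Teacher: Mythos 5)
The paper gives no proof of Proposition \ref{Skro}: it is recalled as a standard fact, with Nualart \cite{N} cited for details, so there is no in-paper argument to compare yours against. Your proposal is the standard textbook proof (duality applied twice plus the product rule $D_t(GF)=F\,D_tG+G\,D_tF$ on smooth test variables, then a closure argument), and the computation itself is correct, as is the reduction of the adapted case to Proposition \ref{Int1}. One logical point should be repaired: you open by applying the duality formula to $\mathbb{E}[G\,\delta(Fu)]$, which presupposes that $Fu\in\mathrm{Dom}(\delta)$ --- but that membership is part of what the proposition implicitly asserts (the hypothesis that the right-hand side lies in $L^2(\Omega)$ is precisely what secures it), and $Fu\in L^2([0,T]\times\Omega)$ alone does not place $Fu$ in the domain of the adjoint. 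The argument should therefore run in the opposite direction: starting from $\mathbb{E}\bigl[\int_0^T D_tG\cdot Fu_t\,dt\bigr]$, substitute $F\,D_tG=D_t(GF)-G\,D_tF$ and apply duality to the term involving $D_t(GF)$ to obtain $\mathbb{E}\bigl[G\bigl(F\delta(u)-\int_0^T D_tF\cdot u_t\,dt\bigr)\bigr]$; since the random variable in the inner parentheses is assumed to belong to $L^2(\Omega)$, the definition of the adjoint then yields simultaneously that $Fu\in\mathrm{Dom}(\delta)$ and that $\delta(Fu)$ equals that random variable. With this reordering, together with the density and closability step you already flag, the proof is complete.
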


\section{Asset Dynamics}
To compute price sensitivities of (\ref{fa3}) with respect to model parameters of the two underlying assets we must specify the risk-neutral dynamics of the two underlying assets $S_1(t)$ and $S_2(t)$. We consider the underlying assets $S_1(t)$ and $S_2(t)$ under the risk-neutral measure to be given by the two-dimensional system of It$\hat{\text{o}}$ stochastic differential equations of the form
\begin{eqnarray}\label{a}
dS_1(t)&=& S_1(t)[\mu_1(S_1(t),S_2(t))dt+\sigma_1(S_1(t),S_2(t))dW_1(t)]~~~S_1(0)=x_1,\nonumber\\
dS_2(t)&=& S_2(t)[\mu_2(S_1(t),S_2(t))dt+\sigma_2(S_1(t),S_2(t))dW_2(t)]~~~S_2(0)=x_2,
\end{eqnarray}
where $W_1(t)$ and $W_2(t)$ are correlated standard Brownian motions with correlation coefficient $\rho(-1,1)$. The coefficients $\mu_1$, $\mu_2$, $\sigma_1$ and $\sigma_2$ are assumed to satisfy the usual conditions to ensure the existence and uniqueness of a strong solution of (\ref{a}).\\
Given an arbitrary ${W}_1$, there exists $\widetilde{W}_2$ which is independent of ${W}_1$ and $W_2$. Then we can express $W_2$ as follows $W_2(t)=\rho {W}_1(t)+\sqrt{1 - \rho^2}\widetilde{W}_2(t)$. We also express ${W_1}$ as $W_1(t)=\widetilde{W}_1(t)$. We can rewrite (\ref{a}) as
\begin{eqnarray}\label{b}
dS_1(t)&=& S_1(t)[\mu_1(S_1(t),S_2(t))dt+\sigma_1(S_1(t),S_2(t))d\widetilde{W}_1(t)]\nonumber\\
dS_2(t)&=& S_2(t)[\mu_2(S_1(t),S_2(t))dt+\rho\sigma_2(S_1(t),S_2(t))d\widetilde{W}_1(t)+\sigma_2(S_1(t),S_2(t))\sqrt{1-\rho^2}d\widetilde{W}_2(t)
].\nonumber\\
\end{eqnarray}
Setting $S(t)=(S_1(t),S_2(t))^*$ and $\mathbb{W}=(W_1(t),W_2(t))^*$ ($(\cdot)^*$ denote the transpose of $(\cdot)$) and a two-dimensional notation we can write (\ref{b}) as
\begin{equation}\label{c}
dS(t)=\beta(S(t))dt+a(S(t))d\mathbb{W}(t)
\end{equation}
where
\[\beta(S(t))=\left(
  \begin{array}{c}
  \mu_1(S(t))S_1(t) \\
 \mu_2(S(t))S_2(t)
  \end{array}
\right)\]
and \[a(S(t))=\left(
  \begin{array}{cc}
    \sigma_1(S(t))S_1(t) & 0\\
    \rho\sigma_2(S(t))S_2(t) & \sqrt{1-\rho^2}\sigma_2(S(t))S_2(t)\\
  \end{array}
\right)\,.\]
We assume that $\beta$ and $a$ are both at least twice continuously differentiable functions with bounded derivatives and $a(x)\neq0$ for all $x\in\mathbb R^2$. To ensure that (\ref{c}) has a unique strong solution we further assume that $\beta$ and $a$ satisfy the Lipschitz and polynomial growth conditions.\\
The first variation process $\{Y(t),~0\leq t\leq T\}$ associated to $\{S(t),~0\leq t\leq T\}$ given in (\ref{c}) is defined by the stochastic differential equation
\begin{equation}\label{d}
dY(t)=\beta^{\prime}(S(t))Y(t)dt+\sum_{i=1}^2a_i^{\prime}(S(t))Y(t)dW_i(t), ~~~Y(0)=I_2,
\end{equation}
where $I_2$ is the $2\times2$ identity matrix of $\mathbb R^2$, the primes denote derivatives and $a_i$ is the $\text{i-th}$ column matrix of $a$.\\
\section{Computation of price sensitivities}
Following Fourni$\acute{\text{e}}$ et al. \cite{Fo}, we assume that the diffusion matrix $a$ satisfies the uniform elliptic condition:
\begin{equation}\label{e}
\exists \epsilon>0~~~~(a(x)\xi)^*(a(x)\xi)>\epsilon\mid\xi\mid^2~~~\text{for
all}~~x, \xi\in\mathbb R^n,~~\text{with}~~\xi\neq0. \end{equation}
In the computation of Greeks via Malliavin calculus, a weight function which is independent of the payoff function is obtained. To obtain a valid computation result, one has to guarantee that the Malliavin weights do not degenerate with probability one. To avoid this degeneracy we introduce the set $\Upsilon_n$ (see \cite{Fo}) defined by
\begin{equation}
\Upsilon_n=\{\alpha\in L^2([0,T])\mid\int_0^{t_i}\alpha(t)dt=1~~\text {for
all}~~ i=1,\ldots,n\}.
\end{equation}
We need the following lemma \cite{Mh}.
\begin{lem}
\label{12}
If $(Y(t)Y^{-1}(r)\alpha(r))\in L^2([0,T]\times\Omega)$ for all $r,t\in[0,T]$, then $S(t)$ is Malliavin differentiable and the Malliavin derivative of $X(t)$ can be written as follows:
\begin{equation}
D_rS(t)=Y(t)Y^{-1}(r)a(S(r))\textbf{1}_{r\leq t},~~~r\geq0,~~\text{a.s.}
\end{equation}
which is equivalent to
\begin{equation}
Y(t)=\int_0^TD_rS(t)\alpha(r)a^{-1}(S(r))Y(r)dr~~~~~\forall \alpha\in \Upsilon_n.
\end{equation}
\end{lem}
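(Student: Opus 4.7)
The plan is to differentiate the SDE (\ref{c}) in the Malliavin sense, identify the resulting equation for $D_rS(t)$ with a perturbed version of the first variation equation (\ref{d}), and then use the linearity of that equation together with the defining property of $\alpha \in \Upsilon_n$ to invert the relationship.

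First I would appeal to the standard existence/regularity theory for SDEs under Malliavin calculus (the reference Nualart \cite{N} suffices), using the assumed $C^2$ regularity and boundedness of the derivatives of $\beta$ and $a$ together with the Lipschitz/polynomial growth conditions, to conclude that $S(t)\in \mathbb{D}^{1,2}$ for every $t\in[0,T]$. Applying $D_r$ to the integral form
\[
S(t)=S(0)+\int_0^t\beta(S(s))\,ds+\int_0^t a(S(s))\,d\mathbb{W}(s),
\]
together with the chain rule from Proposition \ref{Cha} and the commutation property between $D_r$ and the It\^o integral (which produces the boundary term $a(S(r))$ when differentiating the stochastic integral at time $r$), yields, for $r\leq t$,
\[
D_r S(t)=a(S(r))+\int_r^t \beta'(S(s))\,D_r S(s)\,ds+\sum_{i=1}^{2}\int_r^t a_i'(S(s))\,D_r S(s)\,dW_i(s),
\]
while $D_rS(t)=0$ for $r>t$ by adaptedness.

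This is exactly the linear SDE (\ref{d}) satisfied by $Y(t)$, save that the initial condition is now $a(S(r))$ at time $r$ rather than $I_2$ at time $0$. By linearity, the process $t\mapsto Y(t)Y^{-1}(r)a(S(r))$ solves the same equation with the same initial data at time $r$. Uniqueness for this linear SDE with bounded coefficients then gives
\[
D_r S(t)=Y(t)Y^{-1}(r)a(S(r))\mathbf{1}_{\{r\leq t\}}.
\]

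To obtain the second (equivalent) representation, I would multiply the identity above on the right by $\alpha(r)a^{-1}(S(r))Y(r)$ and integrate in $r$ over $[0,T]$. The factor $a(S(r))a^{-1}(S(r))=I_2$ telescopes, $Y^{-1}(r)Y(r)=I_2$, and $Y(t)$ can be taken out of the integral, leaving
\[
\int_0^T D_r S(t)\,\alpha(r)\,a^{-1}(S(r))Y(r)\,dr=Y(t)\int_0^t \alpha(r)\,dr=Y(t),
\]
where the last equality uses the defining property $\int_0^{t_i}\alpha(r)\,dr=1$ of $\Upsilon_n$ at the time $t=t_i$ of interest. The $L^2([0,T]\times\Omega)$ hypothesis on $Y(t)Y^{-1}(r)\alpha(r)$ is precisely what makes this integral well-defined and justifies interchanging the integral with the expectation in any subsequent use.

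The main obstacle I anticipate is the careful justification of the commutation between $D_r$ and the stochastic integral (and the ensuing well-posedness of the linear SDE for $D_rS(t)$ in $\mathbb{D}^{1,2}$); once that classical machinery from \cite{N} is invoked, the identification with the first variation process is purely algebraic, and the integral inversion using $\alpha\in\Upsilon_n$ is immediate.
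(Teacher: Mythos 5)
Your argument is correct and is the standard one: Malliavin-differentiate the integral form of (\ref{c}), recognize the resulting linear equation as the first variation equation (\ref{d}) started at time $r$ from $a(S(r))$, conclude $D_rS(t)=Y(t)Y^{-1}(r)a(S(r))\mathbf{1}_{\{r\leq t\}}$ by uniqueness, and then invert using $\int_0^{t_i}\alpha(r)\,dr=1$. The paper itself gives no proof of Lemma \ref{12}, merely citing \cite{Mh}, and the cited argument (going back to Fourni\'e et al.\ \cite{Fo} and Nualart \cite{N}) is essentially the one you reproduce, so there is nothing to correct.
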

Let $\Phi:\mathbb R^2\rightarrow\mathbb R$ be a measurable function. We assume that the $\Phi$ satisfies the integrability condition
\[\mathbb E[\Phi^2(S_1(T),S_2(T)]<\infty. \]
From the arbitrage theory, the price of the spread option can be expressed in terms of the expectation as in (\ref{fa3}). We have the following results.

\begin{prop}\label{La}
Suppose that the functions $\beta$ and $a$ in (\ref{c}) are continuously differentiable with bounded derivatives, and the diffusion matrix $a$ satisfies the uniform ellipticity condition (\ref{e}). In addition, the spread payoff function $\Phi$ is square integrable and continuously differentiable with bounded derivatives. Then for all $\alpha\in \Upsilon_n$, we have
\begin{equation}
\Delta_1=\mathbb E[e^{-rT}\Phi(S_1(T),S_2(T))\pi^{\Delta_1}],
\end{equation}
where the Malliavin weight $\pi^{\Delta_1}$ is
\begin{equation}\label{mi}
\pi^{\Delta_1}=\int_0^T\alpha(t)(a^{-1}(S(t))Y(t))^*d\mathbb W(t).
\end{equation}
\end{prop}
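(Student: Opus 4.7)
The plan is to differentiate the pricing formula (\ref{fa3}) with respect to the initial value $x_1$ of $S_1$, use the first variation process to identify the resulting path derivative, convert it into a Malliavin derivative via Lemma \ref{12}, and finally apply the integration by parts formula of Proposition \ref{Int} to transfer the derivative onto a Skorohod integral which, once the integrand is seen to be adapted, produces the announced weight via Proposition \ref{Int1}.

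First, using the boundedness of the derivatives of $\Phi$, $\beta$ and $a$ (so that $\{S(t)\}$ and $\{Y(t)\}$ enjoy $L^p$ moments of all orders), I would justify the interchange of $\partial/\partial x_1$ and the expectation by dominated convergence, obtaining
\begin{equation*}
\Delta_1=\mathbb E\!\left[e^{-rT}\,\nabla\Phi(S(T))^{*}\,Y(T)e_1\right],
\end{equation*}
where $e_1=(1,0)^{*}$ selects the first column of the Jacobian $Y(T)$ of $S(T)$ with respect to the initial condition. This is the usual pathwise identification of $\partial_{x_1}S(T)$ with $Y(T)e_1$ that comes from differentiating the stochastic differential equation (\ref{c}).

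Next, I would insert Lemma \ref{12} into this expression. For an arbitrary $\alpha\in\Upsilon_n$,
\begin{equation*}
Y(T)e_1=\int_0^T D_rS(T)\,\alpha(r)\,a^{-1}(S(r))\,Y(r)e_1\,dr,
\end{equation*}
and the chain rule of Proposition \ref{Cha} identifies $\nabla\Phi(S(T))^{*}D_rS(T)$ with $D_r\Phi(S(T))$. Setting $u(r):=\alpha(r)a^{-1}(S(r))Y(r)e_1$ and substituting back yields
\begin{equation*}
\Delta_1=\mathbb E\!\left[e^{-rT}\int_0^T D_r\Phi(S(T))\cdot u(r)\,dr\right].
\end{equation*}
After checking that $u\in\text{Dom}(\delta)$ — a check for which the uniform ellipticity condition (\ref{e}) is crucial, as it gives a bounded $a^{-1}$, while $\alpha\in L^2([0,T])$ together with $L^p$-bounds on $Y$ supplies the required integrability — Proposition \ref{Int} rewrites the inner integral as $\mathbb E[\Phi(S(T))\delta(u)]$. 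Since $u$ is $\mathcal F_t$-adapted, Proposition \ref{Int1} identifies $\delta(u)$ with the It\^o integral $\int_0^T u(r)\cdot d\mathbb W(r)$, which, up to the transpose convention used in (\ref{mi}), is precisely $\pi^{\Delta_1}$.

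The main obstacle is verifying the integrability hypotheses for the integration by parts step: namely, that $u$ belongs to $\text{Dom}(\delta)$ and that the product $\Phi(S(T))\,\delta(u)$ lies in $L^1(\Omega)$. This rests on the uniform ellipticity (\ref{e}) (via the bound on $a^{-1}$) and on $L^p$-estimates for $Y$ and $Y^{-1}$ that follow from the bounded derivatives of $\beta$ and $a$; a Cauchy--Schwarz argument, leveraging the square-integrability assumption on $\Phi(S(T))$, then closes the gap. Everything else in the derivation is an essentially mechanical application of the Malliavin calculus toolkit recalled in Section~3.
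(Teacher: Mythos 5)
Your argument is correct and follows essentially the same route as the paper: differentiate under the expectation via dominated convergence, identify $\partial_{x_1}S(T)$ with the first column of the first variation process, rewrite it through Lemma \ref{12} and the chain rule of Proposition \ref{Cha} as a Malliavin derivative, and apply the duality of Propositions \ref{Int} and \ref{Int1} to the adapted integrand $\alpha(\cdot)a^{-1}(S(\cdot))Y(\cdot)e_1$. The only substantive difference is that the paper's proof continues with an approximation step, replacing $\Phi$ by smooth compactly supported $\Phi_n$ and passing to the limit via Cauchy--Schwarz, so as to cover payoffs that are merely square integrable (as the actual call-spread payoff is); your proof omits this, which is harmless for the proposition as literally stated since its hypotheses already assume $\Phi$ continuously differentiable with bounded derivatives.
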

\begin{proof}
First assume $\Phi\in C_b^2(\mathbb R^2;\mathbb R)$ We have
\begin{eqnarray*}
\Delta_1&=&\frac{\partial}{\partial x_1}\mathbb E[e^{-rT}\Phi(S_1(T),S_2(T))]=\mathbb E[e^{-rT}\frac{\partial}{\partial x_1}\Phi(S_1(T),S_2(T))]\\
&=&\mathbb E[e^{-rT}\Phi^{\prime}(S_1(T),S_2(T))\frac{\partial S_1(T)}{\partial x_1}]=\mathbb E[e^{-rT}\Phi^{\prime}(S_1(T),S_2(T))Y_1(T)],
\end{eqnarray*}
where the interchange of the derivative and the expectation is justified by the dominated convergence theorem. In fact, as $\varepsilon\rightarrow0$
\[ \frac{\Phi(S_1(T)(1+\frac{\varepsilon}{x_1}),S_2(T))-\Phi(S_1(T),S_2(T))}{\varepsilon}\rightarrow\Phi^{\prime}(S_1(T),S_2(T))Y_1(T)~~~\text{a.s}\]
and by the Taylor theorem
\begin{eqnarray*}
&&\left|\frac{\Phi(S_1(T)(1+\frac{\varepsilon}{x_1}),S_2(T))-\Phi(S_1(T),S_2(T))}{\varepsilon}\right|\\
&&\leq\int_0^1\mathbb E\left[\left|\Phi^{\prime}(S_1(T)(1+\frac{\delta\varepsilon}{x_1}),S_2(T))\left(\frac{S_1(T)(1+\frac{\delta\varepsilon}{x_1})}{x_1}\right)\right|\right]d\delta
\end{eqnarray*}
which is clearly uniformly bounded in $\varepsilon$. This proves that
\[\Delta_1=\mathbb E[e^{-rT}\Phi^{\prime}(S_1(T),S_2(T))Y_1(T)].\]
From Lemma \ref{12} we have
\[\Delta_1=\mathbb E\left[\int_0^Te^{-rT}\Phi^{\prime}(S_1(T),S_2(T))D_rS_1(t)\alpha(t)a^{-1}(S(t))Y_1(t)dt\right].\]
An application of Proposition \ref{Cha} and Proposition \ref{Int}, and the fact that the Skorohod integral coincides with the It$\hat{\text{o}}$ stochastic integral (Proposition \ref{Int1}) yields
\begin{eqnarray*}
\Delta_1&=&\mathbb E\left[\int_0^Te^{-rT}D_s\Phi(S_1(T),S_2(T))\alpha(t)a^{-1}(S(t))Y_1(t)dt\right]\\
&=&\mathbb E\left[e^{-rT}\Phi(S_1(T),S_2(T))\int_0^T\alpha(t)\left(a^{-1}(S(t))Y(t) \right)^*dW(t)\right].
\end{eqnarray*}
This is the desired result for $\Phi\in C_b^2(\mathbb R^2;\mathbb R)$. It remains to remove the regularity assumption on $\Phi$. To this end, we consider $\Phi$ such that $\mathbb E[e^{-rT}\Phi^2(S_1(T),S_2(T))$ is locally uniformly bounded in $x_1$. We can always find a sequence $\{\Phi_n\}_{n\in\mathbb N}$ of contentiously differentiable functions from $\mathbb R^2$ to $\mathbb R$ with compact support such that
\[ \lim_{n\rightarrow+\infty}\mathbb E[|e^{-rT}\Phi_n(S_1(T),S_2(T))-e^{-rT}\Phi(S_1(T),S_2(T))|]=0.\]
Hence by the Cauchy-Schwartz inequality with $\mathbb E[|\pi^{\Delta_1}|^2]<\infty$ we have that for each $x_1$
\begin{eqnarray*}
&&\left|\mathbb E\left[e^{-rT}\Phi_n(S_1(T),S_2(T))\pi^{\Delta_1} - e^{-rT}\Phi(S_1(T),S_2(T))\pi^{\Delta_1}\right] \right|^2\\
&&\leq
\mathbb E\left[\left|e^{-rT}\Phi_n(S_1(T),S_2(T))-e^{-rT}\Phi(S_1(T),S_2(T))\right|^2\right]\mathbb E\left[\left|\pi^{\Delta_1}\right|^2\right]\\
&&\rightarrow0  \end{eqnarray*}
as $n\rightarrow\infty$. Then we obtain that
\begin{eqnarray}\label{Se}
&&\mathbb E\left[e^{-rT}\Phi_n(S_1(T)(1+\frac{\varepsilon}{x_1}),S_2(T))\right]-\mathbb E\left[e^{-rT}\Phi_n(S_1(T),S_2(T))\right]\nonumber\\
&&=\int_0^{\varepsilon}
\mathbb E\left[e^{-rT}\Phi_n(S_1(T)(1+\frac{h}{x_1}),S_2(T))\pi^{\Delta_1}\right]dh \end{eqnarray}
Then, by taking limits as $n$ tends to $\infty$, we obtain that $\mathbb E[e^{-rT}\Phi(S_1(T)(1+\frac{\varepsilon}{x_1}),S_2(T))]$ is continuous in $x_1$. In a similar fashion, we can prove that $\mathbb E[e^{-rT}\Phi(S_1(T),S_2(T))\pi^{\Delta_1}]$ is continuous in $x_1$. Finally, by taking limits as $n$ tends to $\infty$ in (\ref{Se}) and dividing by $\varepsilon$, we obtain that $\mathbb E[e^{-rT}\Phi(S_1(T),S_2(T))]$ is differentiable with respect to $x_1$ and the desired formula holds. This completes the proof.
\end{proof}
\begin{prop}\label{La1}
Suppose that the functions $\beta$ and $a$ in (\ref{c}) are continuously differentiable with bounded derivatives, and the diffusion matrix $a$ satisfies the uniform ellipticity condition (\ref{e}). In addition, the spread payoff function $\Phi$ is square integrable and continuously differentiable with bounded derivatives. Then for all $\alpha\in \Upsilon_n$, we have
\begin{equation}
\Delta_2=\mathbb E[e^{-rT}\Phi(S_1(T),S_2(T))\pi^{\Delta_2}],
\end{equation}
where the Malliavin weight $\pi^{\Delta_2}$ is
\begin{equation}
\pi^{\Delta_2}=\int_0^T\alpha(t)(a^{-1}(S(t))Y(t))^*d\mathbb W(t).
\end{equation}
\end{prop}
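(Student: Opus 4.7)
The plan is to follow the proof of Proposition \ref{La} essentially verbatim, simply replacing the role of $x_1$ by $x_2$ throughout. First I would assume $\Phi\in C_b^2(\mathbb{R}^2;\mathbb{R})$ and differentiate the risk-neutral price under the expectation, justifying the interchange of the derivative and the expectation via dominated convergence and a Taylor expansion, exactly as in the previous proof but using the perturbation $S_2(T)(1+\varepsilon/x_2)$ in place of $S_1(T)(1+\varepsilon/x_1)$. This yields
$$\Delta_2 = \mathbb{E}\!\left[e^{-rT}\,\Phi'(S_1(T),S_2(T))\,Y_2(T)\right],$$
where $Y_2(T) = \partial S_2(T)/\partial x_2$ is the relevant component of the first variation process defined in (\ref{d}).

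Second, I would invoke Lemma \ref{12} to rewrite $Y_2(T)$ as an integral of the form $\int_0^T D_r S_2(T)\,\alpha(r)\, a^{-1}(S(r))\, Y_2(r)\, dr$. The chain rule (Proposition \ref{Cha}) then converts $\Phi'(S_1(T),S_2(T))\, D_r S_2(T)$ into $D_r\Phi(S_1(T),S_2(T))$ contracted against the appropriate vector. Applying the integration by parts formula (Proposition \ref{Int}), together with the fact that the Skorohod integral coincides with the It\^o integral on adapted integrands (Proposition \ref{Int1}), collapses the duality into the single stochastic integral
$$\Delta_2 = \mathbb{E}\!\left[e^{-rT}\,\Phi(S_1(T),S_2(T))\int_0^T\alpha(t)\,(a^{-1}(S(t))Y(t))^*\,d\mathbb{W}(t)\right],$$
which is the claimed formula.

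Third, I would remove the $C_b^2$ assumption by the same approximation argument as in Proposition \ref{La}: choose a sequence $\{\Phi_n\}\subset C_c^1(\mathbb{R}^2;\mathbb{R})$ with $\mathbb{E}[|\Phi_n(S_1(T),S_2(T))-\Phi(S_1(T),S_2(T))|]\to 0$, use the Cauchy--Schwarz inequality together with $\mathbb{E}[|\pi^{\Delta_2}|^2]<\infty$ to pass to the limit in the weighted expectation, and use a finite-difference identity analogous to (\ref{Se}) in the $x_2$ direction to deduce continuity of both sides in $x_2$ and then differentiability, so that the formula extends from $\Phi_n$ to $\Phi$.

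The main obstacle, such as it is, is purely notational bookkeeping: one must select the column of the $2\times 2$ first variation matrix $Y(t)$ corresponding to the $x_2$ perturbation, and check that, when multiplied by $a^{-1}(S(t))$ and transposed, it produces the same vector-valued integrand $\alpha(t)(a^{-1}(S(t))Y(t))^*$ appearing in the statement. Since Lemma \ref{12} delivers the full matrix-valued Malliavin derivative, once this identification is made the remainder of the argument is completely parallel to the proof of Proposition \ref{La}, and in particular no new estimate or regularity tool is needed.
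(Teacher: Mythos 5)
Your proposal is correct and matches the paper exactly: the paper's proof of this proposition is a one-line reference stating that it follows the same arguments as Proposition \ref{La}, which is precisely the adaptation (replacing $x_1$ by $x_2$ and selecting the corresponding component of the first variation process) that you carry out. No further comment is needed.
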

\begin{proof} The proof follows the same arguments as in the proof of Proposition \ref{La}.
\end{proof}

\begin{prop}\label{far}
Suppose that the functions $\beta$ and $a$ in (\ref{c}) are continuously differentiable with bounded derivatives, and the diffusion matrix $a$ satisfies the uniform ellipticity condition (\ref{e}). In addition, the spread payoff function $\Phi$ is square integrable and continuously differentiable with bounded derivatives. Then for all $\alpha\in \Upsilon_n$, we have
\begin{equation}
\Gamma_1=\mathbb E[e^{-rT}\Phi(S_1(T),S_2(T))\pi^{\Gamma_1}],
\end{equation}
where the Malliavin weight $\pi^{\Gamma_1}$ is
\begin{equation}\label{mi1}
\pi^{\Gamma_1}=\left(\pi^{\Delta_1}\right)^2-\frac{1}{x_1}\pi^{\Delta_1}-\int_0^T\alpha(t)\left(a^{-1}(S(t))Y_1(t)\right)^2dt.
\end{equation}
\end{prop}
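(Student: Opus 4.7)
My plan is to differentiate the identity $\Delta_1 = \mathbb{E}[e^{-rT}\Phi(S_1(T), S_2(T))\pi^{\Delta_1}]$ of Proposition~\ref{La} once more in $x_1$. The interchange of $\partial_{x_1}$ and $\mathbb{E}$ is justified by dominated convergence, essentially repeating the Taylor--bound argument at the end of the proof of Proposition~\ref{La}, now under the stronger regularity $\Phi\in C_b^2$ that I would assume at the outset. Applying the product rule under the expectation yields
\begin{equation*}
\Gamma_1 = \mathbb{E}\bigl[e^{-rT}\Phi'(S(T))\,Y_1(T)\,\pi^{\Delta_1}\bigr] + \mathbb{E}\bigl[e^{-rT}\Phi(S(T))\,\partial_{x_1}\pi^{\Delta_1}\bigr],
\end{equation*}
where $Y_1(T) = \partial_{x_1}S(T)$ is the first column of the first variation matrix.

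For the first expectation I would repeat the Malliavin manipulation of Proposition~\ref{La}: use Lemma~\ref{12} to write $Y_1(T) = \int_0^T D_r S(T)\,\alpha(r)\, a^{-1}(S(r))\, Y_1(r)\,dr$ and the chain rule (Proposition~\ref{Cha}) to recognise $\Phi'(S(T))\,D_r S(T) = D_r\Phi(S(T))$. The product rule for the Malliavin derivative, $D_r[\Phi\,\pi^{\Delta_1}] = D_r\Phi\cdot\pi^{\Delta_1} + \Phi\,D_r\pi^{\Delta_1}$, combined with the integration by parts formula (Proposition~\ref{Int}, together with Proposition~\ref{Int1} to realise the Skorohod integral as an It\^o integral), then transforms that expectation into $\mathbb{E}[e^{-rT}\Phi(S(T))(\pi^{\Delta_1})^2]$ minus a Skorohod correction
\begin{equation*}
\mathbb{E}\Bigl[e^{-rT}\Phi(S(T))\int_0^T D_r\pi^{\Delta_1}\cdot\alpha(r)\, a^{-1}(S(r))\, Y_1(r)\,dr\Bigr].
\end{equation*}
Computing $D_r\pi^{\Delta_1}$ by differentiating under the It\^o integral (the leading term is precisely $\alpha(r)\, a^{-1}(S(r))\, Y_1(r)$) reduces this correction to $\int_0^T \alpha(t)\bigl(a^{-1}(S(t))Y_1(t)\bigr)^2\,dt$, the last summand of~(\ref{mi1}).

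For the second expectation I would compute $\partial_{x_1}\pi^{\Delta_1}$ directly. Since $x_1$ enters the dynamics~(\ref{c}) only through the initial condition $S_1(0)=x_1$ and the drift/diffusion carry an explicit multiplicative factor $S_i$, the integrand $\alpha(t)(a^{-1}(S(t))Y_1(t))^{*}$ inherits the scaling $\partial_{x_1}(a^{-1}(S(t))Y_1(t)) = -x_1^{-1}\,a^{-1}(S(t))Y_1(t)$; this is just the geometric--Brownian identity $Y_1(t)=S_1(t)/x_1$ in the constant--coefficient case, and follows in general by differentiating the first--variation SDE~(\ref{d}) and invoking uniqueness of the strong solution. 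Hence $\partial_{x_1}\pi^{\Delta_1} = -\pi^{\Delta_1}/x_1$, producing the middle term of~(\ref{mi1}). Summing the two contributions yields the formula for $\Phi\in C_b^2$; the extension to square--integrable $\Phi$ is then obtained by the same density argument used at the end of the proof of Proposition~\ref{La}, approximating $\Phi$ by $\Phi_n\in C_c^\infty(\mathbb{R}^2)$ and passing to the limit via Cauchy--Schwarz together with $\mathbb{E}[|\pi^{\Gamma_1}|^2]<\infty$.

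The main obstacle is the careful bookkeeping behind the two ``corrections'' above: confirming that $D_r\pi^{\Delta_1}$ contributes only the clean integrand $\alpha(r)\,a^{-1}(S(r))\,Y_1(r)$ once paired against $\alpha(r)\,a^{-1}(S(r))\,Y_1(r)$ (the higher--order Skorohod terms coming from $D_r$ acting on $a^{-1}(S(t))$ and $Y_1(t)$ must be shown to drop out or to be absorbed into the $(\pi^{\Delta_1})^2$ contribution), and verifying the scaling identity $\partial_{x_1}\pi^{\Delta_1} = -\pi^{\Delta_1}/x_1$. Both reductions rely crucially on the geometric multiplicative form of the drift and diffusion in~(\ref{c}) and on Lemma~\ref{12}, and this is where I would expect most of the technical work to lie.
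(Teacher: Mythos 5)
Your proposal is correct and follows essentially the same route as the paper: differentiate the representation of $\Delta_1$ from Proposition~\ref{La}, split by the product rule, treat the term $\mathbb E[e^{-rT}\Phi' Y_1(T)\pi^{\Delta_1}]$ by the Malliavin integration by parts with the Skorohod correction $\delta(Fu)=F\delta(u)-\int_0^T D_tF\cdot u_t\,dt$ (the paper invokes Proposition~\ref{Skro} with $D_t\pi^{\Delta_1}=\alpha(t)a^{-1}(S(t))Y(t)$), and obtain the middle term from the scaling $\partial_{x_1}\pi^{\Delta_1}=-\pi^{\Delta_1}/x_1$, which the paper justifies by writing $\pi^{\Delta_1}=G/x_1$ with $G=x_1\pi^{\Delta_1}$. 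The technical caveats you flag (higher-order contributions of $D_r$ acting on $a^{-1}(S(t))$ and $Y_1(t)$ inside the weight, and the validity of the scaling identity beyond the constant-coefficient case) are real but are glossed over in the paper as well, so your account matches the published argument.
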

\begin{proof}
As in the proof of Proposition \ref{La}, it suffices to show the result with $\Phi\in C_b^2(\mathbb R^2;\mathbb R)$. Define $G:=x_1\pi^{\Delta_1}$ so that $\pi^{\Delta_1}=\frac{1}{x_1}G$. Then $\frac{\partial \pi^{\Delta_1}}{\partial x_1}=-\frac{1}{x_1^2}G=-\frac{1}{x_1}\pi^{\Delta_1}$. We have

\begin{eqnarray}\label{kl}
 \Gamma_1&=&\frac{\partial^2}{\partial x_1^2}\mathbb E[e^{-rT}\Phi(S_1(T),S_2(T))]\nonumber\\
  &=& \frac{\partial}{\partial x_1}\mathbb E[e^{-rT}\Phi(S_1(T),S_2(T))\pi^{\Delta_1}]\nonumber\\
  &=&\mathbb E[e^{-rT}\Phi^{\prime}(S_1(T),S_2(T))Y_1(T)\pi^{\Delta_1}] -\frac{1}{x_1}\mathbb E[e^{-rT}\Phi(S_1(T),S_2(T))\pi^{\Delta_1}].
 \end{eqnarray}
where the interchange of the derivative and the expectation are justified by the dominated convergence theorem. For the first term in (\ref{kl}) we use similar arguments as in the proof of Proposition \ref{La}:
\begin{eqnarray*}
&&\mathbb E[e^{-rT}\Phi^{\prime}(S_1(T),S_2(T))Y_1(T)\pi^{\Delta_1}]\nonumber\\&&=
\mathbb E\left[\int_0^Te^{-rT}D_t\Phi(S_1(T),S_2(T))\alpha(t)(a^{-1}(S(t))Y_1(t))\pi^{\Delta_1}\right]\\
&&=\mathbb E\left[e^{-rT}\Phi(S_1(T),S_2(T))\delta\left(\alpha(\cdot)(a^{-1}(X(\cdot))Y_1(\cdot))\pi^{\Delta_1}\right)\right].\end{eqnarray*}
Finally, applying the the integration by parts formula (Proposition \ref{Skro}) with $D_t\pi^{\Delta_1}=\alpha(t)(a^{-1}(S(t))Y(t))$, we have
\begin{eqnarray}\label{Sk1}
\delta\left(\alpha(\cdot)(a^{-1}(X(\cdot))Y_1(\cdot))\pi^{\Delta_1}\right)&=&\pi^{\Delta_1}\int_0^T\alpha(t) (a^{-1}(S(t))Y_1(t))dW_1(t)\nonumber\\&&-\int_0^T\alpha(t)\left(a^{-1}(S(t))Y_1(t)\right)^2dt\nonumber\\
&=&\left(\pi^{\Delta_1}\right)^2-\int_0^T\alpha(t)\left(a^{-1}(S(t))Y_1(t)\right)^2dt.
\end{eqnarray}
Combining (\ref{kl}) with (\ref{Sk1}) we get the desired result for $\Phi\in C_b^2(\mathbb R^2;\mathbb R)$. To remove the regularity assumption on $\Phi$ we need
\[\mathbb E\left[\left|e^{-rT}\Phi(S_1(T),S_2(T))\left(\pi^{\Delta_1}\right)^4\right|\right]^2\leq \mathbb E[e^{-2rT}\Phi^2(S_1(T),S_2(T))]\mathbb E[\left(\pi^{\Delta_1}\right)^8]<+\infty. \]
which complete the proof.
\end{proof}
\begin{prop}\label{far1}
Suppose that the functions $\beta$ and $a$ in (\ref{c}) are continuously differentiable with bounded derivatives, and the diffusion matrix $a$ satisfies the uniform ellipticity condition (\ref{e}). In addition, the spread payoff function $\Phi$ is square integrable and continuously differentiable with bounded derivatives. Then for all $\alpha\in \Upsilon_n$, we have
\begin{equation}
\Gamma_2=\mathbb E[e^{-rT}\Phi(S_1(T),S_2(T))\pi^{\Gamma_2}],
\end{equation}
where the Malliavin weight $\pi^{\Gamma_1}$ is
\begin{equation}
\pi^{\Gamma_2}=\left(\pi^{\Delta_2}\right)^2-\frac{1}{x_2}\pi^{\Delta_2}-\int_0^T\alpha(t)\left(a^{-1}(S(t))Y_2(t)\right)^2dt.
\end{equation}
\end{prop}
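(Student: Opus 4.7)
The statement is the exact analogue of Proposition \ref{far} with the index $1$ replaced by $2$, so my plan is to mirror that proof line by line, keeping careful track of which coordinate of the first variation process appears and where the factor $1/x_2$ is produced.

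First I would reduce to the regular case $\Phi\in C_b^2(\mathbb R^2;\mathbb R)$ exactly as in the proof of Proposition \ref{far}; the density/approximation argument at the end of that proof carries over verbatim once one has the weight $\pi^{\Gamma_2}$ in $L^p$ for all $p$. Assuming this smoothness, I would use Proposition \ref{La1} to write
\[
\Gamma_2=\frac{\partial^2}{\partial x_2^2}\mathbb E[e^{-rT}\Phi(S_1(T),S_2(T))]
=\frac{\partial}{\partial x_2}\mathbb E[e^{-rT}\Phi(S_1(T),S_2(T))\pi^{\Delta_2}],
\]
then differentiate inside the expectation (justified by dominated convergence, as in Proposition \ref{far}). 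The subtlety is that both $\Phi(S_1(T),S_2(T))$ and the weight $\pi^{\Delta_2}$ depend on $x_2$. For the payoff, $\partial S_2(T)/\partial x_2=Y_2(T)$ so a chain-rule term $\Phi'(S_1(T),S_2(T))Y_2(T)\pi^{\Delta_2}$ appears. For the weight, writing $G:=x_2\pi^{\Delta_2}$ (which no longer depends on $x_2$ through the prefactor) gives $\partial_{x_2}\pi^{\Delta_2}=-\pi^{\Delta_2}/x_2$, producing the term $-\tfrac{1}{x_2}\mathbb E[e^{-rT}\Phi(S_1(T),S_2(T))\pi^{\Delta_2}]$.

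Next I would rewrite the chain-rule term by invoking Lemma \ref{12} to replace $Y_2(T)$ by a time-integral of $D_rS_2(T)$, apply the chain rule (Proposition \ref{Cha}) to recognize $D_t\Phi(S_1(T),S_2(T))$, and then use the duality (Proposition \ref{Int}) together with Proposition \ref{Int1} to transfer the derivative to a Skorohod/Itô integral:
\[
\mathbb E[e^{-rT}\Phi'(S_1(T),S_2(T))Y_2(T)\pi^{\Delta_2}]
=\mathbb E\!\left[e^{-rT}\Phi(S_1(T),S_2(T))\,\delta\bigl(\alpha(\cdot)(a^{-1}(S(\cdot))Y_2(\cdot))\pi^{\Delta_2}\bigr)\right].
\]
The Skorohod integration by parts (Proposition \ref{Skro}), together with the fact that $D_t\pi^{\Delta_2}=\alpha(t)(a^{-1}(S(t))Y(t))$, then yields
\[
\delta\bigl(\alpha(\cdot)(a^{-1}(S(\cdot))Y_2(\cdot))\pi^{\Delta_2}\bigr)
=\pi^{\Delta_2}\int_0^T\alpha(t)(a^{-1}(S(t))Y_2(t))\,dW_2(t)
-\int_0^T\alpha(t)\bigl(a^{-1}(S(t))Y_2(t)\bigr)^2dt,
\]
and the first term collapses to $(\pi^{\Delta_2})^2$ by the definition of $\pi^{\Delta_2}$. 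Combining with the $-\tfrac{1}{x_2}\pi^{\Delta_2}$ term from the chain rule on the weight gives the announced expression.

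The step I expect to be the main obstacle is not any single calculation but the bookkeeping between the matrix-valued first variation $Y$ and its column/row $Y_2$ corresponding to $x_2$: one must check that the duality pairing collapses correctly so that only the $W_2$-component of $d\mathbb W(t)$ survives when one differentiates with respect to $x_2$, and that the quadratic-variation term in Proposition \ref{Skro} really produces $\int_0^T\alpha(t)(a^{-1}(S(t))Y_2(t))^2dt$ and not a mixed term involving $Y_1$. Once this is verified, the removal of the regularity assumption on $\Phi$ proceeds exactly as in Proposition \ref{far}, using Cauchy--Schwarz and the integrability of $(\pi^{\Delta_2})^4$ (hence of $\pi^{\Gamma_2}$) in $L^2(\Omega)$, which follows from the uniform ellipticity (\ref{e}) and the boundedness of derivatives of $\beta$ and $a$.
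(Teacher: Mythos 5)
Your proposal is correct and follows essentially the same route as the paper: the paper's own proof of this proposition is simply the one-line remark that the argument of Proposition \ref{far} carries over with the index $1$ replaced by $2$, and your write-up is a faithful, slightly more detailed transcription of that argument (reduction to $\Phi\in C_b^2$, differentiation under the expectation with the $-\pi^{\Delta_2}/x_2$ term from the weight, duality plus the Skorohod product rule, and the density argument to remove regularity). The bookkeeping concern you raise about $Y_2$ versus $Y$ and the surviving $W_2$-component is legitimate but is exactly the same (implicit) step already present in the paper's proof of Proposition \ref{far}, so no new idea is needed.
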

\begin{proof}
The proof follows the same arguments as in the proof of Proposition \ref{far}.
\end{proof}

 Next we consider the derivative of the price of the spread option with respect to
  the volatilities $\sigma_i$, $i=1,2$. This computation is not as straightforward as in the computations of $\Delta_i$ and $\Gamma_i$, for $i=1,2$. Here, instead of computing the derivative of the price of the spread option with respect to the associated volatilities, we consider adding a perturbation term $\varepsilon$ to the volatility term and then observe the effect of the perturbation on the price of the spread option. To avoid degeneracy, we introduce the set of deterministic function
  \[\widetilde{\Upsilon}_n=\{ \widetilde{\alpha}\in L^2([0,T]):\int_{t_{i-1}}^{t_i}\widetilde{\alpha}(t)dt,~~~\forall i=1,...,n.\} \]
  Let $\gamma:\mathbb R^+\times\mathbb R^2\rightarrow\mathbb R^2\times\mathbb R^2$ a direction function for the volatility term such that $\varepsilon\in[-1,1]$, $\gamma$ and $\sigma+\varepsilon\gamma$ are continuously differentiable with bounded derivatives and verify Lipschitz conditions such that the following uniform elliptic condition is satisfied:
  \begin{equation}\label{Ass1}
  \exists\epsilon>0~~~\xi^*(a+\varepsilon\gamma)^*(x)(a+\varepsilon\gamma)(x)\xi\geq\epsilon\parallel\xi\parallel^2,~~~\forall~~\xi,x\in\mathbb R^2, ~~\xi\neq0.
  \end{equation}
  As in Fourni$\acute{\text{e}}$ et al. \cite{Fo}, we consider the perturbed process $(S^{\varepsilon}(t))_{t\in[0,T]}$ as a solution of the following stochastic differential equation
  \begin{equation}\label{ju}
dS^{\varepsilon}(t)=\beta(S^{\varepsilon}(t))dt+[a(S^{\varepsilon}(t))+\varepsilon\gamma(S^{\varepsilon}(t))]d\mathbb W(t), ~~~S^{\varepsilon}(0)=x.
 \end{equation}
 WE also relate to this perturbed process the perturbed price of the spread option $u^{\varepsilon}$ defined by
 \begin{equation}
 u^{\varepsilon}=\mathbb E[e^{-rT}\Phi(S_1^{\varepsilon}(T),S_2^{\varepsilon}(T))].
 \end{equation}
 We also introduce a variation process with respect to $\varepsilon$, which is the derivative of $X^{\varepsilon}(t)$ with respect to the parameter $\varepsilon$ ($Z^{\varepsilon}(t):=\frac{\partial X^{\varepsilon}(t)}{\partial\varepsilon}$):

\begin{equation}\label{po}
dZ^{\varepsilon}(t)=\beta^{\prime}(S^{\varepsilon}(t))Z^{\varepsilon}(t)dt+
\sum_{i=1}^2(a_i^{\prime}(S^{\varepsilon}(t))+ \varepsilon\gamma_i^{\prime}(S^{\varepsilon}(t)))Z^{\varepsilon}(t)dW_i(t)+\gamma(S^{\varepsilon}(t))d\mathbb W(t),~~Z^{\varepsilon}(0)=0_{2\times 2}\nonumber\\\end{equation}
where $0_{2\times2}$ is the zero column vector in $\mathbb R^2$ and $\gamma_i^{\prime}$ denotes the derivative of the i-th column.

\begin{prop}\label{zzz3} Assume that the uniformly elliptic condition (\ref{Ass1}) holds and for $B(t_i)=Y^{-1}(t_i)Z(t_i)=Y^{-1}(t_i)Z^{\varepsilon=0}(t_i), ~i=1,2,$ there exists $a^{-1}(X)YB\in{\text{Dom}}(\delta)$. Then, for any square integrable spread option payoff function, $\Phi$, with continuously differentiable and bounded derivatives,
 \begin{equation}
\frac{\partial}{\partial
\varepsilon}u^{\varepsilon}\mid_{\varepsilon=0}=\mathbb
E[e^{-rT}\Phi(S_1(T),S_2(T))\delta\left(a^{-1}(S(\cdot))Y(\cdot)\widetilde{B}(\cdot)\right)]
\end{equation}
holds. Here
\[\widetilde{B}(t)=\sum_{i=1}^2\widetilde{\alpha}(t)(B({t_i})-B({t_{i-1}}))1_{\{t\in[t_{i-1},t_i]\}}, \]
 for $t_0=0$ and $\widetilde{\alpha}\in\tilde{\Upsilon_n}$. Moreover, if $B$ is Malliavin differentiable, then
 \begin{eqnarray*}
 \delta\left(a^{-1}(S(\cdot)Y(\cdot)\widetilde{B}(\cdot) \right)&=&\sum_{i=1}^2\left\{B^*(t_i)\int_{t_{i-1}}^{t_i}\widetilde{\alpha}(t)(a^{-1}(S(t))Y(t))^*d\mathbb W(t)\right.\\
 &&\left.-\int_{t_{i-1}}^{t_i}\widetilde{\alpha}(t)\text{Tr}((D_tB(t_i))a^{-1}(S(t))Y(t))dt\right.\\
 &&\left.- \int_{t_{i-1}}^{t_i}\widetilde{\alpha}(t)(a^{-1}(S(t))Y(t)B(t_{i-1}))^*d\mathbb W(t)\right\}.
 \end{eqnarray*}
\end{prop}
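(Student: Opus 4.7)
The plan is to follow the three-step pattern of Propositions \ref{La} and \ref{far}. I first assume $\Phi \in C_b^2(\mathbb R^2;\mathbb R)$, differentiate under the expectation using Taylor's theorem and dominated convergence (the uniform ellipticity (\ref{Ass1}) and the smooth coefficients give the needed second-moment bounds), and obtain
\[
\frac{\partial}{\partial \varepsilon} u^{\varepsilon}\Big|_{\varepsilon=0} = \mathbb E\bigl[e^{-rT}\, \Phi'(S_1(T),S_2(T))\, Z(T)\bigr],
\]
with $Z(T) = Y(T) B(T)$ by the definition of $B$. Since $\widetilde\alpha \in \widetilde\Upsilon_n$ gives $\int_{t_{i-1}}^{t_i} \widetilde\alpha(t)\, dt = 1$ and $B(0) = 0$, a telescoping computation produces
\[
\int_0^T Y(T)\widetilde B(t)\, dt = \sum_{i=1}^n Y(T)\bigl(B(t_i) - B(t_{i-1})\bigr) = Y(T) B(T) = Z(T).
\]
Combined with the identity $D_t S(T) = Y(T) Y^{-1}(t) a(S(t))$ from Lemma \ref{12}, this rewrites $Z(T) = \int_0^T D_t S(T)\, a^{-1}(S(t)) Y(t) \widetilde B(t)\, dt$. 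The chain rule (Proposition \ref{Cha}) then converts $\Phi'(S(T))\, D_t S(T) = D_t \Phi(S(T))$, and Malliavin integration by parts (Proposition \ref{Int}) collapses the time integral into the stated expression $\mathbb E[e^{-rT}\Phi(S_1(T),S_2(T))\, \delta(a^{-1}(S(\cdot)) Y(\cdot) \widetilde B(\cdot))]$.

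For the explicit expansion, I split the Skorohod integral along the subintervals $[t_{i-1}, t_i]$, writing $\widetilde B(t) = \widetilde\alpha(t) B(t_i) - \widetilde\alpha(t) B(t_{i-1})$ on each piece. Since $B(t_{i-1})$ is $\mathcal F_{t_{i-1}}$-measurable, the process $\widetilde\alpha(t)\, a^{-1}(S(t)) Y(t) B(t_{i-1})$ is adapted on the subinterval, so Proposition \ref{Int1} lets its Skorohod integral be replaced by an It\^o integral, producing the third term $-\int_{t_{i-1}}^{t_i}\widetilde\alpha(t)(a^{-1}(S(t))Y(t)B(t_{i-1}))^*\, d\mathbb W(t)$. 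For the anticipating factor $B(t_i)$ I apply Proposition \ref{Skro} with the multiplier $F = B(t_i)$: this yields the product $B^*(t_i)\int_{t_{i-1}}^{t_i} \widetilde\alpha(t)(a^{-1}(S(t)) Y(t))^*\, d\mathbb W(t)$ plus the Malliavin correction $-\int_{t_{i-1}}^{t_i} \widetilde\alpha(t)\, \operatorname{Tr}(D_t B(t_i)\, a^{-1}(S(t)) Y(t))\, dt$, the trace arising from the matrix-valued contraction of $D_t B(t_i)$ against the integrand.

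Finally, the $C_b^2$ assumption on $\Phi$ is removed by approximating with compactly supported smooth functions $\Phi_n$ and using Cauchy--Schwarz together with the $L^2(\Omega)$ bound on the weight $\delta(a^{-1}(S) Y \widetilde B)$, exactly as in the final step of the proof of Proposition \ref{La}. I expect the main obstacle to be the multidimensional bookkeeping: unlike the scalar Skorohod manipulation in Proposition \ref{far}, $B$, $Y$, and $a^{-1}$ are matrix- or vector-valued, and the correction term from Proposition \ref{Skro} produces the trace, which has to be contracted in the correct order for the identity to match the claimed form. The hypothesis $a^{-1}(S) Y B \in \mathrm{Dom}(\delta)$ is precisely what is needed to make each application of Malliavin integration by parts legal.
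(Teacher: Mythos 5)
Your argument is correct and is precisely the standard perturbation/variation-process argument for \emph{Vega} from Fourni\'e et al.\ \cite{Fo}: differentiate in $\varepsilon$ to get $\mathbb E[e^{-rT}\Phi'(S(T))Z(T)]$, telescope $\widetilde B$ using $\int_{t_{i-1}}^{t_i}\widetilde\alpha(t)\,dt=1$ and $B(0)=0$ to recover $Z(T)=Y(T)B(T)=\int_0^T D_tS(T)a^{-1}(S(t))Y(t)\widetilde B(t)\,dt$, integrate by parts, and then expand the Skorohod integral on each subinterval via Propositions \ref{Int1} and \ref{Skro}. The paper itself supplies no proof of Proposition \ref{zzz3}, deferring entirely to Proposition 3.1.5 of \cite{Mh}, and your write-up is exactly the argument that reference carries out, so there is nothing substantive to compare beyond noting that you have filled in the omitted details correctly.
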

\begin{proof}
The proof follow the same line of argument as in the proof of Proposition 3.1.5 in \cite{Mh}.
\end{proof}

\section{Examples}
We consider the risk-neutral price dynamics given by the following systems of stochastic differential equations
\begin{eqnarray}\label{jul}
dS_1(t)&=&S_1(t)[(r-q_1)dt+\sigma_1dW_1(t)],~~~S_1(0)=x_1,\nonumber\\
dS_2(t)&=&S_2(t)[(r-q_2)dt+\sigma_2dW_2(t)],~~~S_2(0)=x_2,
\end{eqnarray}
where $q_1$ and $q_2$ are the instantaneous dividend yields, $\sigma_1$ and $\sigma_2$ are positive constants volatilities, $S_1(t)$ and $S_2(t)$ are prices of two assets at time $t$, and $W_1(t)$ and $W_2(t)$ are two standard Brownian motions with correlation parameter $\rho\in(-1,1).$ For all $t\in[0,T]$ we define
\[ \widetilde{W}_2(t):=\frac{1}{\sqrt{1-\rho^2}}\left(W_2(t)-\rho{W}_1(t)\right) ~~~\text{and}~~~W_1(t)=\widetilde{W}_1(t).\]
The process $\{\widetilde{W}_2(t)~~0\leq t\leq T\}$ is a Brownian motion which is independent of $W_1(t)$ and ${W}_2(t)$. Then the system of stochastic differential equations (\ref{jul}) can be rewritten in matrix form
\[dS(t)=\beta(S(t))dt+a(S(t))d\mathbb W(t),~~~X(0)=(x_1,x_2)\]
where
\[\beta(S(t))=\left(
  \begin{array}{c}
   (r-q_1)S_1(t) \\
 (r-q_2)S_2(t)
  \end{array}
\right)\]
and
\[a(S(t))=\left(
  \begin{array}{cc}
    \sigma_1 S_1(t) & 0 \\
    \rho\sigma_2S_2(t) & \sigma_2\sqrt{1-\rho^2}S_2(t)\\
  \end{array}
\right)\,.\]
The inverse of $a$ is
\[a^{-1}(S(t))=\frac{1}{\sigma_1\sigma_2\sqrt{1-\rho^2}S_1(t)S_2(t)}\left(
  \begin{array}{cc}
    \sigma_2\sqrt{1-\rho^2}S_2(t) & 0\\
    -\rho\sigma_2S_2(t) & \sigma_1 S_1(t)\\
  \end{array}
\right)\,.\]
The first variation process is given by
\[dY(t)=\beta^{\prime}(S(t))Y(t)dt+a_1^{\prime}(S(t))Y(t)dW_1(t)+a_2^{\prime}(S(t))Y(t)dW_2(t),~~~Y(0)=I,\]
where
\[\beta^{\prime}(S(t))=\left(
  \begin{array}{cc}
    r-q_1 & 0\\
    0 & r-q_2\\
  \end{array}
\right)\,,~~~~
a_1^{\prime}(S(t))=\left(
  \begin{array}{cc}
    \sigma_1& 0\\
    0 & \rho\sigma_2\\
  \end{array}
\right)\,,
~~\text{and}~~a_2^{\prime}(S(t))=\left(
  \begin{array}{cc}
    0 & 0\\
    0 & \sigma_2\sqrt{1-\rho^2}\\
  \end{array}
\right)\,.\]
The matrix $(a^{-1}(S(t))Y(t))^*$ has the following form
\[(a^{-1}(S(t))Y(t))^* =\left(
  \begin{array}{cc}
    \frac{Y^{11}(t)}{\sigma_1S_1(t)} & -\frac{\rho Y^{11}(t)}{\sigma_1\sqrt{1-\rho^2}S_1(t)}\\
    \frac{Y^{12}(t)}{\sigma_1 S_1(t)} & -\frac{\rho Y^{12}(t)}{\sigma_1S_1(t)\sqrt{1-\rho^2}}+\frac{Y^{22}(t)}{\sigma_2S_2(t)\sqrt{1-\rho^2}}\\
  \end{array}
\right)\,.\]
An application of Proposition \ref{La} with $Y^{11}(t)=\frac{S_1(t)}{x_1}$, $Y^{22}(t)=\frac{S_2(t)}{x_2}$ and $Y^{21}(t)=0$ yields
\[\Delta_1=\mathbb E\left[e^{-rT}\Phi(S_1(T),S_2(T))\frac{\sqrt{1-\rho^2} W_1(T)-\rho W_2(T)}{\sigma_1x_1T\sqrt{1-\rho^2}}\right]\] and an application of Proposition \ref{La1} yields
\[\Delta_2=\mathbb E\left[e^{-rT}\Phi(S_1(T),S_2(T))\frac{W_2(T)}{\sigma_2x_2T\sqrt{1-\rho^2}}\right].\]
For Gamma, an application of Propositions \ref{far} - \ref{far1} yields
\[ \Gamma_1=\mathbb E\left[e^{-rT}\Phi(S_1(T),S_2(T))\left\{ \left(\pi^{\Delta_1}\right)^2-\frac{1}{x_1}\left(\pi^{\Delta_1}\right)-\frac{1}{Tx_1^2\sigma_1^2(1-\rho^2)} \right\}\right] \]
and
\[\Gamma_2=\mathbb E\left[e^{-rT}\Phi(S_1(T),S_2(T))\left\{ \left(\pi^{\Delta_2}\right)^2-\frac{1}{x_2}\left(\pi^{\Delta_2}\right)-\frac{1}{Tx_2^2\sigma_2^2(1-\rho^2)} \right\} \right] .\]
For \emph{Vega} we perturbed diffusion matrix of the equation for $S_1(t)$ with $\gamma$. We note from (\ref{po}) that
\begin{equation}\label{po1}dZ_1(t)=(r-q_1)Z_1(t)dt+\sigma_1Z_1(t)dW_1(t)+S_1(t)dW_1(t)\end{equation} where we have chosen $\gamma$ to be
\[\gamma(S(t))=\left(
  \begin{array}{cc}
    S_1(t)&0\\
    0&0\\
      \end{array}
\right)\,.\]
Since $S_2(t)$ do not depend on $S_1(t)$ we set $Z_2(t)=0$. An application of the It$\hat{\text{o}}$ formula to (\ref{po1}) yields the following solution
\[ Z_1(t)=S_1(t)(W_1(t)-\sigma_1t). \]
Since $B(t)=Y^{-1}(t)Z(t)$, we have
\[B(T)=x_1(W_1(T)-\sigma_1T).\]
The Malliavin derivative of $B(T)$ is calculated as follows
\[ D_tB(T)=D_t(x_1(W_1(T)-\sigma_1T))=x_1. \]
 An application of Proposition \ref{zzz3} yields
\[\mathcal V_1=\mathbb E[e^{-rT}\Phi(S_T,S_2(T))\pi^{\mathcal V_1}]  \]
where the Malliavin weight $\pi^{\mathcal V_1}$ is given by
\[\pi^{\mathcal V_1}=\frac{1}{T}\left\{\left(W_1(T)-\sigma_1T\right)\left(\frac{W_1(T)}{\sigma_1}-\frac{\rho W_2(T)}{\sigma_1\sqrt{1-\rho^2}}\right)-\frac{T}{\sigma_1}\right\}.\]
Following a similar procedure and applying Proposition \ref{zzz3} we obtain
\[\mathcal V_2=\mathbb E[e^{-rT}\Phi(S_T,S_2(T))\pi^{\mathcal V_2}]  \]
where the Malliavin weight $\pi^{\mathcal V_2}$ is given by
\newpage
\begin{eqnarray*}\pi^{\mathcal V_2}&=&\frac{1}{T}\left\{\left(W_2(T)-\rho\sigma_2T-\sigma_2\sqrt{1-\rho^2}T\right)\right.\\
&&\left.\times\left(\int_0^T\frac{Y^{12}(t)dW_1(t)}{\sigma_1S_1(t)}-
\frac{\rho }{\sigma_2\sqrt{1-\rho^2}}\int_0^T\frac{Y^{12}(t)dW_2(t)}{S_1(t)}+\frac{W_2(T)}{\sigma_2\sqrt{1-\rho^2}}\right)\right.\\
&&\left.-\frac{T}{\sigma_2\sqrt{1-\rho^2}}\right\}.\end{eqnarray*}

\section{Application of spread options with stochastic volatility }
The stochastic volatility has an important impact on the price of the spread option. The stochastic volatility helps to understand the market evolution completely. We set up the dynamics with stochastic volatility as follows
\begin{eqnarray}\label{Fa5}
dS_1(t)&=&S_1(t)[\mu_1(t)dt+\sigma_1\sqrt{V(t)}dW_1(t)],~~~S_1(0)=x_1,\nonumber\\
dS_2(t)&=&S_2(t)[\mu_2(t)dt+\sigma_2\sqrt{V(t)}dW_2(t)],~~~S_2(0)=x_2,\nonumber\\
dV(t)&=&\kappa(1-V(t))dt+\nu\sqrt{V(t)}dZ(t),~~~~V(0)=v_0,
\end{eqnarray}
where $S_i(t)$, $i=1,2$ denote the asset prices and $V(t)$ represents a volatility factor, $\kappa$ measures the speed at which $V(t)$ reverts towards $1$, $\nu$ is the parameter which determines the volatility of the variance process.\\
We specify that $dW_1(t)dW_2(t)=\rho dt$ and $dW_i(t)dZ(t)=0$, $i=1,2.$\\
Due to the independence of $W_i$, $i=1,2$ and $Z$, we have that $S_i(T)$, $i=1,2$ given by the integrated variance
\[ \overline{V}(T):=\int_0^TV(t)dt,\]
is lognormally distributed.This result can be generalised to two-dimensional case where $S_1(T)$ and $S_2(T)$ given $\overline{V}(T)$ are jointly lognormal. This argument, however, is not pursued in this paper.\\
Define
\[ W_2(t):=\rho\widetilde{W}_1(t)+\sqrt{1-\rho^2}\widetilde{W}_2(t),~~W_1(t)=\widetilde{W}_1(t), ~~\text{and}~~Z(t)=\widetilde{W}_3(t).\]
The system of stochastic differential equation (\ref{Fa5}) can be written in matrix form
\begin{equation}\label{Fa7}
dS(t)=\beta(S(t))dt+a(S(t))d\mathbb W(t)
\end{equation}
where $S(t)=(S_1(t),S_2(t),V(t))^*$, $\mathbb W(t)=(W_1(t),W_2(t),Z(t))^*$,

\[\beta(S(t))=\left(
  \begin{array}{c}
    \mu(t)S_1(t)\\
    \mu_2(t)S_2(t)\\
    \kappa(1-V(t))
  \end{array}
\right)\,,\]
and
\[a(S(t))=\left(
  \begin{array}{ccc}
    \sigma_1\sqrt{V(t)} S_1(t) & 0 & 0\\
    \rho\sigma_2\sqrt{V(t)}S_2(t) & \sigma_2\sqrt{1-\rho^2}\sqrt{V(t)}S_2(t) & 0\\
    0& 0 & \nu\sqrt{V(t)}
  \end{array}
\right)\,. \]
The inverse of $a$ is
\[a^{-1}(S(t))=\left(
  \begin{array}{ccc}
   \frac{1}{\sigma_1\sqrt{V(t)} S_1(t)} & 0 & 0\\
   -\frac{\rho}{\sigma_1\sqrt{1-\rho^2}\sqrt{V(t)}S_1(t)} & \frac{1}{\sigma_2\sqrt{1-\rho^2}\sqrt{V(t)}S_2(t)}& 0\\
    0& 0& \frac{1}{\nu\sqrt{V(t)}}
  \end{array}
\right)\,.\]
The first variation process is given by
\[dY(t)=\beta^{\prime}(S(t))Y(t)dt+a_1^{\prime}(S(t))Y(t)dW_1(t)+a_2^{\prime}(S(t))Y(t)dW_2(t)+a_3^{\prime}(S(t))Y(t)dW_3(t)\]
where
\[\beta^{\prime}(S(t))=\left(
  \begin{array}{ccc}
    \mu_1(t) & 0&0\\
    0 & \mu_2(t)&0\\
    0&0&-\kappa
  \end{array}
\right)\,,~~~a_1^{\prime}(S(t))=\left(
  \begin{array}{ccc}
    \sigma_1\sqrt{V(t)} & 0  & \frac{\sigma_1 S_1(t)}{2\sqrt{V(t)}}\\
    0 & \rho\sigma_2\sqrt{V(t)}& \frac{\rho\sigma_2S_2(t)}{2\sqrt{V(t)}}\\
    0& 0& 0
  \end{array}
\right)\,,\]

\[a_2^{\prime}(S(t))=\left(
  \begin{array}{ccc}
    0 & 0  & 0\\
    0 & \sigma_2\sqrt{1-\rho^2}\sqrt{V(t)}& \frac{\sigma_2 \sqrt{1-\rho^2}S_2(t)}{2\sqrt{V(t)}}\\
    0& 0& 0
  \end{array}
\right)\,,\]
and
\[a_3^{\prime}(S(t))=\left(
  \begin{array}{ccc}
   0 & 0  & 0\\
    0 & 0&0\\
    0& 0& \frac{\nu}{2\sqrt{V(t)}}
  \end{array}
\right)\,.\]
The matrix $(a^{-1}(S(t))Y(t))^*$ has the following form
\[(a^{-1}(S(t))Y(t))^*=\left(
  \begin{array}{ccc}
   \frac{Y^{11}(t)}{\sigma_1\sqrt{V(t)}S_1(t)} & -\frac{\rho Y^{11}(t)}{\sigma_1\sqrt{1-\rho^2}\sqrt{V(t)}S_1(t)}  & 0\\
    \frac{Y^{12}(t)}{\sigma_1\sqrt{V(t)}S_1(t)}& -\frac{\rho Y^{12}(t)}{\sigma_1\sqrt{1-\rho^2}\sqrt{V(t)}S_1(t)}+\frac{Y^{22}(t)}{\sigma_2\sqrt{1-\rho^2}\sqrt{V(t)}S_2(t)}&0\\
    \frac{Y^{13}(t)}{\sigma_1\sqrt{V(t)}S_1(t)}& -\frac{\rho Y^{13}(t)}{\sigma_1\sqrt{1-\rho^2}\sqrt{V(t)}S_1(t)}& \frac{Y^{33}(t)}{\nu\sqrt{V(t)}}
  \end{array}
\right)\,.\]
This, with the applications of Propositions \ref{La}  - \ref{far1}, yields the following results.
\begin{prop}
Suppose that $\beta$ and $a$ in (\ref{Fa7}) are continuously differentiable functions with bounded derivatives and that diffusion matrix $a$ satisfies the uniform ellipticity condition (\ref{e}). Then, for any $\alpha\in\Upsilon_n$ and $\Phi(S_1(T),S_2(T))$ square integrable, we have \begin{enumerate}
  \item \[\Delta_1=\mathbb E\left[e^{-rT}\Phi(S_1(T),S_2(T))\pi^{\Delta_1}\right]\]
  where $\pi^{\Delta_1}$ is the Malliavin weight given by
  \begin{equation}\label{de} \pi^{\Delta_1}=\frac{1}{\sigma_1x_1T}\int_0^T\frac{1}{\sqrt{V(t)}}dW_1(t)-
  \frac{\rho}{\sigma_1\sqrt{1-\rho^2}x_1T}\int_0^T\frac{1}{\sqrt{V(t)}}dW_2(t).\end{equation}
  \item \[\Delta_2=\mathbb E\left[e^{-rT}\Phi(S_1(T),S_2(T))\pi^{\Delta_2}\right]\]
  where $\pi^{\Delta_2}$ is the Malliavin weight given by
  \begin{eqnarray}\label{de2} \pi^{\Delta_2}&=&\frac{1}{\sigma_1 T}\int_0^T\frac{Y^{12}(t)}{\sqrt{V(t)}S_1(t)}dW_1(t)-\frac{\rho}{\sigma_1\sqrt{1-\rho^2}T}\int_0^T\frac{Y^{12}(t)}{\sqrt{V(t)}S_1(t)}dW_2(t)
  \nonumber\\ &&+\frac{1}{\sigma_2\sqrt{1-\rho^2}x_2T}\int_0^T\frac{1}{\sqrt{V(t)}}dW_2(t).\end{eqnarray}

\end{enumerate}
\end{prop}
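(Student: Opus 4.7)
The plan is to obtain this proposition as a direct specialization of Propositions \ref{La} and \ref{La1} to the three-dimensional diffusion $(S_{1},S_{2},V)$ introduced in (\ref{Fa7}), reading off the Malliavin weights from the explicit expression for $(a^{-1}(S(t))Y(t))^{*}$ displayed in the paragraphs preceding the statement.

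First I would verify the hypotheses of Propositions \ref{La}--\ref{La1} in this setting: the drift $\beta$ and the diffusion $a$ are smooth, and $a$ is invertible whenever $V(t)>0$. The uniform ellipticity (\ref{e}) is the delicate point for the Heston-type factor $V$, since $\sqrt{V(t)}$ can in principle degenerate; I would handle this by imposing the Feller condition $2\kappa\geq\nu^{2}$ so that $V$ stays strictly positive almost surely, or by working on the event $\{V(t)\geq\delta\}$ and letting $\delta\downarrow 0$. Having established the setting, I pick the simplest admissible $\alpha\in\Upsilon_{n}$ corresponding to $n=1$ and $t_{1}=T$, namely $\alpha(t)\equiv 1/T$, which is the choice tacitly made in the statement.

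Next I would read off the weights row by row from the matrix $(a^{-1}(S(t))Y(t))^{*}$. For $\Delta_{1}$, Proposition \ref{La} picks out the first row, whose only nonzero entries are $Y^{11}(t)/(\sigma_{1}\sqrt{V(t)}S_{1}(t))$ and $-\rho\,Y^{11}(t)/(\sigma_{1}\sqrt{1-\rho^{2}}\sqrt{V(t)}S_{1}(t))$; the third slot (against $dZ$) is zero. Because $S_{1}$ satisfies a linear SDE in $x_{1}$ with deterministic drift $\mu_{1}(t)$ and volatility $\sigma_{1}\sqrt{V(t)}$, one checks by differentiating the closed form that $Y^{11}(t)=\partial S_{1}(t)/\partial x_{1}=S_{1}(t)/x_{1}$, so $Y^{11}(t)/S_{1}(t)=1/x_{1}$. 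Multiplying by $\alpha(t)=1/T$ and integrating against $d\mathbb{W}$ reproduces formula (\ref{de}) exactly. For $\Delta_{2}$, Proposition \ref{La1} selects the second row of $(a^{-1}(S(t))Y(t))^{*}$; an identical computation using $Y^{22}(t)=S_{2}(t)/x_{2}$ gives the last integral of (\ref{de2}), while the two $Y^{12}(t)/(\sqrt{V(t)}S_{1}(t))$ terms record the contribution of the off-diagonal entry of the first variation process to the weight.

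The main obstacle is the ellipticity verification just mentioned; once that is taken care of, the remainder is algebraic bookkeeping on the explicit matrix $(a^{-1}(S(t))Y(t))^{*}$ together with the two closed-form identifications of $Y^{11}$ and $Y^{22}$. The standard approximation argument needed to drop the smoothness hypothesis on $\Phi$ is not repeated, since it is carried out verbatim as in the proof of Proposition \ref{La}.
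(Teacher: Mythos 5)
Your proposal follows essentially the same route as the paper, which gives no separate proof here but simply invokes Propositions \ref{La} and \ref{La1} applied to the displayed matrix $(a^{-1}(S(t))Y(t))^{*}$ with $\alpha(t)=1/T$ and the identifications $Y^{11}(t)=S_1(t)/x_1$, $Y^{22}(t)=S_2(t)/x_2$. Your explicit attention to the possible degeneracy of $\sqrt{V(t)}$ (via the Feller condition or a truncation argument) is in fact more careful than the paper, which tacitly assumes the uniform ellipticity condition (\ref{e}) holds for this Heston-type diffusion.
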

\begin{prop}
Suppose that $\beta$ and $a$ in (\ref{Fa7}) are continuously differentiable functions with bounded derivatives and that diffusion matrix $a$ satisfies the uniform ellipticity condition (\ref{e}). Then, for any $\alpha\in\Upsilon_n$ and $\Phi(S_1(T),S_2(T))$ square integrable, we have \begin{enumerate}
  \item \[\Gamma_1=\mathbb E\left[e^{-rT}\Phi(S_1(T),S_2(T))\pi^{\Gamma_1}\right]\]
  where $\pi^{\Gamma_1}$ is the Malliavin weight given by
  \[ \pi^{\Gamma_1}= \left(\pi^{\Delta_1}\right)^2-\frac{1}{x_1}\pi^{\Delta_1}-\frac{1}{T\sigma_1^2x_1^2(1-\rho^2)}\int_0^T\frac{1}{V(t)}dt,\]
  where $\pi^{\Delta_1}$ is given in (\ref{de}).
  \item \[\Gamma_2=\mathbb E\left[e^{-rT}\Phi(S_1(T),S_2(T))\pi^{\Gamma_2}\right]\]
  where $\pi^{\Gamma_2}$ is the Malliavin weight given by
  \[\pi^{\Gamma_2}=\left(\pi^{\Delta_2}\right)^2-\frac{1}{x_1}\pi^{\Delta_2}-\frac{1}{T\sigma_2^2x_2^2(1-\rho^2)}\int_0^T\frac{1}{V(t)}dt,\]
  where $\pi^{\Delta_2}$ is given in (\ref{de2}).
\end{enumerate}
\end{prop}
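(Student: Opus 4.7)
The plan is to apply the general formulas from Propositions \ref{far} and \ref{far1} to the three-dimensional state vector $S(t)=(S_1(t),S_2(t),V(t))^*$ of the stochastic volatility system (\ref{Fa7}), and then specialize the resulting Malliavin weight to the explicit form of $a^{-1}(S(t))Y(t)$ written out just before the statement. Strictly, Propositions \ref{far} and \ref{far1} were stated for the two-dimensional setting, but their derivation (chain rule, integration by parts, duality between $\delta$ and the It\^o integral, and Proposition \ref{Skro}) carries over verbatim to any $n$, so I would begin by observing that the same weight
\[
\pi^{\Gamma_i}=(\pi^{\Delta_i})^2-\frac{1}{x_i}\pi^{\Delta_i}-\int_0^T\alpha(t)\bigl|a^{-1}(S(t))Y_i(t)\bigr|^2 dt,\qquad i=1,2,
\]
is valid for the three-dimensional system once the hypotheses on $\beta$, $a$, and $\Phi$ are in force.

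Second, I would exploit the block structure of the first variation process. Because $\beta$, $a_1$, $a_2$ have the triangular/diagonal structure shown just before the statement, the SDE for $Y(t)$ decouples: the columns $Y_1(t)$ and $Y_2(t)$ satisfy $Y^{21}(t)=Y^{31}(t)=Y^{12}(t)=Y^{32}(t)=0$, and a direct It\^o computation (as done in the constant-volatility example) gives $Y^{11}(t)=S_1(t)/x_1$ and $Y^{22}(t)=S_2(t)/x_2$. Consequently,
\[
Y_1(t)=\bigl(S_1(t)/x_1,\,0,\,0\bigr)^*,\qquad Y_2(t)=\bigl(0,\,S_2(t)/x_2,\,0\bigr)^*.
\]

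Third, I would plug these into the explicit matrix $(a^{-1}(S(t))Y(t))^*$ listed before the statement. For $i=1$ this yields $a^{-1}(S(t))Y_1(t)=\bigl(\tfrac{1}{\sigma_1 x_1\sqrt{V(t)}},\,-\tfrac{\rho}{\sigma_1 x_1\sqrt{1-\rho^2}\sqrt{V(t)}},\,0\bigr)^*$, whose squared Euclidean norm is
\[
\bigl|a^{-1}(S(t))Y_1(t)\bigr|^2=\frac{1}{\sigma_1^2 x_1^2 V(t)}\Bigl(1+\frac{\rho^2}{1-\rho^2}\Bigr)=\frac{1}{\sigma_1^2 x_1^2(1-\rho^2)V(t)}.
\]
Choosing the standard $\alpha(t)\equiv 1/T\in\Upsilon_n$ (with $t_1=T$) then yields exactly the announced integral term. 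An entirely analogous computation for $Y_2(t)$ produces $|a^{-1}(S(t))Y_2(t)|^2=1/(\sigma_2^2 x_2^2(1-\rho^2)V(t))$, proving part (2). The expressions for $\pi^{\Delta_i}$ are already identified in (\ref{de}) and (\ref{de2}), so they can simply be substituted.

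The main obstacle, and the only nontrivial verification, is that the uniform ellipticity assumption (\ref{e}) and the boundedness-of-derivatives assumption on $a$ required by Propositions \ref{far}--\ref{far1} are actually problematic here, because $a$ contains $\sqrt{V(t)}$, which neither stays bounded away from zero (unless $2\kappa\geq\nu^2$, the Feller condition) nor has uniformly bounded derivatives. I would therefore argue, as is standard for Heston-type models, that one can proceed by localization: truncate $V$ to lie in an interval $[\eps,N]$, carry out the Malliavin calculation on the localized coefficients where the hypotheses of Propositions \ref{far}--\ref{far1} hold verbatim, and then pass to the limit using integrability of the Malliavin weights (this is what ensures the conclusion $\int_0^T V(t)^{-1}dt<\infty$ almost surely in the expectation). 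Once this technical point is acknowledged, the remaining calculation is purely algebraic substitution into the two-dimensional template of Propositions \ref{far} and \ref{far1}, and the proof is complete by the same density and dominated convergence argument used in Proposition \ref{La} to remove the $C_b^2$ regularity on $\Phi$.
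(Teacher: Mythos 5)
Your proposal is correct and follows essentially the same route as the paper, which presents this proposition as a direct consequence of the explicit matrix $(a^{-1}(S(t))Y(t))^*$ computed just beforehand together with Propositions \ref{far}--\ref{far1} (and your algebra for $\bigl|a^{-1}(S(t))Y_i(t)\bigr|^2$ with $\alpha\equiv 1/T$ reproduces the stated weights, including the evident typo $\tfrac{1}{x_1}$ for $\tfrac{1}{x_2}$ in part (2)). You are in fact more careful than the paper, which silently ignores that the $\sqrt{V(t)}$ coefficients violate the bounded-derivative and uniform-ellipticity hypotheses; your localization-and-limit remark addresses a genuine gap the paper leaves open.
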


To evaluate $\mathcal V_1$ we consider the perturbed process given by (\ref{ju}) where $\gamma$ is chosen to be
\[\gamma(S(t))=\left(
  \begin{array}{ccc}
   S_1(t) & 0  & 0\\
    0 & 0&0\\
    0& 0& 0
  \end{array}
\right)\,,\]
where the perturbation is only on the asset price $S_1(t)$. Since $S_2(t)$ and $V(t)$ do not depend on $\epsilon$, we deduce that $Z_1(t)=0$ and $Z_3(t)=0,$ respectively. From (\ref{po}) we deduce that $Z_1(t)$ satisfies the following stochastic differential equation
\[ dZ_1(t)=\mu_1Z_1(t)dt+\sigma_1\sqrt{V(t)}Z_1(t)dW_1(t)+S_1(t)dW_1(t).\]
An application of It$\hat{\text{o}}$ formula yield the solution
\[ Z_1(t)=S_1(t)\left(W_1(t)-\int_0^t\sigma_1\sqrt{V(s)}ds\right).\]
Since $B(t)=Y^{-1}(t)Z(t)$, one has
\[ B(t)=x_1\left(W_1(t)-\int_0^t\sigma_1\sqrt{V(s)}ds\right). \]
An application of Proposition \ref{Cha} and Lemma \ref{12} gives
\begin{eqnarray*} D_tB(T)&=&x_1\left((1,0,0)^*-\sigma_1\int_0^TD_t\sqrt{V(s)}ds\right)\\&=&x_1\left((1,0,0)^*-
\frac{\sigma_1}{2}\int_t^T\frac{\sqrt{V(t)}}{\sqrt{V(s)}}\frac{Y^{22}(s)}{Y^{22}(t)}\left(\rho,\sqrt{1-\rho^2},0\right)^*ds.\right)\end{eqnarray*}
Then, one obtains
\[ \text{Tr}\left((D_tB(T))a^{-1}(S(t))Y(t)\right)=\frac{1}{\sigma_1\sqrt{V(t)}}. \]
The application of Proposition \ref{zzz3} yields the following result.
\begin{prop}\label{ha}
Suppose that $\beta$ and $a$ in (\ref{Fa7}) are continuously differentiable functions with bounded derivatives and that diffusion matrix $a$ satisfies the uniform ellipticity condition (\ref{Ass1}). Then, for any $\Phi(S_1(T),S_2(T))$ square integrable with $\widetilde{\alpha}(t)=\frac{1}{T}$, we have \[\mathcal V_1=\mathbb E[e^{-rT}\Phi(S_1(T),S_2(T))\pi^{\mathcal V_1}]  \]
where the Malliavin weight $\pi^{\mathcal V_1}$ is given by
\begin{eqnarray*}
\pi^{\mathcal V_1}&=&\frac{1}{T}\left\{\left(W_1(T)-\int_0^T\sigma_1\sqrt{V(t)}dt\right)\left(\int_0^T\frac{dW_1(t)}{\sigma_1\sqrt{V(t)}}-\frac{\rho }{\sigma_1\sqrt{1-\rho^2}}\int_0^T\frac{dW_2(t)}{\sqrt{V(t)}}\right)\right.\\
&&\left.-\int_0^T\frac{dt}{\sigma_1\sqrt{V(t)}}\right\}.
\end{eqnarray*}
\end{prop}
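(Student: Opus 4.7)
The proof is a direct specialization of Proposition \ref{zzz3}. All the ingredients have already been assembled in the paragraph preceding the statement: the perturbation direction $\gamma$ acting only on the $S_1$-volatility, the process $B(T) = x_1\bigl(W_1(T) - \int_0^T \sigma_1\sqrt{V(t)}\,dt\bigr)e_1$ (with $e_1$ the first standard basis vector of $\mathbb R^3$), its Malliavin derivative $D_tB(T)$, and the trace $\text{Tr}\bigl((D_tB(T))a^{-1}(S(t))Y(t)\bigr) = 1/(\sigma_1\sqrt{V(t)})$. The plan is to apply the Skorohod integral formula of Proposition \ref{zzz3} with the trivial one-interval partition $t_0 = 0$, $t_1 = T$, so that $\widetilde\alpha(t) \equiv 1/T$ on $[0,T]$ and the contribution involving $B(t_0) = B(0) = 0$ automatically drops out.

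Under this specialization the formula reduces to
\begin{equation*}
\delta\bigl(a^{-1}(S(\cdot))Y(\cdot)\widetilde B(\cdot)\bigr) = \frac{1}{T}B^*(T)\int_0^T \bigl(a^{-1}(S(t))Y(t)\bigr)^*d\mathbb W(t) - \frac{1}{T}\int_0^T \text{Tr}\bigl((D_tB(T))a^{-1}(S(t))Y(t)\bigr)\,dt.
\end{equation*}
Because $B(T)$ is supported only in its first coordinate, the product $B^*(T)\int_0^T(a^{-1}(S)Y)^* d\mathbb W$ picks out only the first row of the matrix $(a^{-1}(S(t))Y(t))^*$. To make this row explicit I would solve the scalar SDE satisfied by $Y^{11}(t)$, which, because the first rows of $a_2'$ and $a_3'$ vanish and the $(1,3)$ coupling to $V$ contributes only through $Y^{31}\equiv 0$, reads $dY^{11}(t) = \mu_1(t)Y^{11}(t)\,dt + \sigma_1\sqrt{V(t)}Y^{11}(t)\,dW_1(t)$ with $Y^{11}(0)=1$. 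Matching this with the SDE for $S_1$ immediately gives $Y^{11}(t) = S_1(t)/x_1$, which collapses the relevant row of $(a^{-1}(S(t))Y(t))^*$ to $\bigl(\tfrac{1}{\sigma_1\sqrt{V(t)}x_1},\; -\tfrac{\rho}{\sigma_1\sqrt{1-\rho^2}\sqrt{V(t)}x_1},\; 0\bigr)$. Substituting this together with the precomputed trace yields exactly the stated $\pi^{\mathcal V_1}$, and the identity $\mathcal V_1 = \mathbb E[e^{-rT}\Phi(S_1(T),S_2(T))\pi^{\mathcal V_1}]$ follows from Proposition \ref{zzz3}.

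The main obstacle is not the algebra but the verification of the hypotheses of Proposition \ref{zzz3} in the stochastic volatility setting. One must check that $a^{-1}(S(\cdot))Y(\cdot)\widetilde B(\cdot) \in \text{Dom}(\delta)$, that $B$ is Malliavin differentiable (this was used already in deriving $D_tB(T)$), and that the uniform ellipticity condition (\ref{Ass1}) for $a+\varepsilon\gamma$ holds uniformly in a neighbourhood of $\varepsilon = 0$. All of these reduce to controlling $1/\sqrt{V(t)}$: strict positivity of $V$ under a Feller-type assumption $2\kappa \geq \nu^2$ and standard CIR moment estimates make the stochastic and Lebesgue integrals against $1/\sqrt{V(t)}$ square integrable, at which point the derivation above is purely mechanical.
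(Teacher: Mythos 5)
Your proposal follows essentially the same route as the paper: the paper's proof of Proposition \ref{ha} consists precisely of the preceding computation of $\gamma$, $Z_1(t)$, $B(t)=x_1\bigl(W_1(t)-\int_0^t\sigma_1\sqrt{V(s)}\,ds\bigr)$, $D_tB(T)$ and the trace $\mathrm{Tr}\bigl((D_tB(T))a^{-1}(S(t))Y(t)\bigr)=1/(\sigma_1\sqrt{V(t)})$, followed by an invocation of Proposition \ref{zzz3} with the trivial partition $\{0,T\}$ and $\widetilde{\alpha}\equiv 1/T$, exactly as you describe. Your closing remarks on verifying the hypotheses (membership in $\mathrm{Dom}(\delta)$ and integrability of $1/\sqrt{V}$ under a Feller-type condition) are a welcome addition that the paper itself does not supply.
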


A similar procedure used to obtain Proposition \ref{ha} with perturbation only in the diffusion coefficient of second asset, $S_1(t)$, together with the application of Proposition \ref{zzz3} yields the following result.
\begin{prop}
Suppose that $\beta$ and $a$ in (\ref{Fa7}) are continuously differentiable functions with bounded derivatives and that diffusion matrix $a$ satisfies the uniform ellipticity condition (\ref{Ass1}). Then, for any $\Phi(S_1(T),S_2(T))$ square integrable with $\widetilde{\alpha}(t)=\frac{1}{T}$, we have \[\mathcal V_2=\mathbb E[e^{-rT}\Phi(S_1(T),S_2(T))\pi^{\mathcal V_2}]  \]
where the Malliavin weight $\pi^{\mathcal V_2}$ is given by
\begin{eqnarray*}
\pi^{\mathcal V_2}&=&\frac{1}{T}\left\{\left(W_2(T)-\int_0^T\rho\sigma_2\sqrt{V(t)}dt-\int_0^T\sigma_2\sqrt{1-\rho^2}\sqrt{V(t)}dt\right)\right.\\
&&\left. \times\left(\int_0^T\frac{Y^{12}(t)dW_1(t)}{\sigma_1\sqrt{V(t)}S_1(t)}-\frac{\rho }{\sigma_1\sqrt{1-\rho^2}}\int_0^T\frac{Y^{12}(t)dW_2(t)}{\sqrt{V(t)}S_1(t)}+
\frac{1}{\sigma_2\sqrt{1-\rho^2}}\int_0^T\frac{dW_2(t)}{\sqrt{V(t)}}\right)\right.\\
&&\left.-\int_0^T\frac{dt}{\sigma_2\sqrt{1-\rho^2}\sqrt{V(t)}}\right\}.
\end{eqnarray*}
\end{prop}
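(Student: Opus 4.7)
The strategy is to adapt the proof of Proposition \ref{ha} by perturbing the diffusion coefficient of $S_2$ instead of $S_1$. First, I would choose the direction matrix $\gamma$ to be zero except in the second row, where the entries are $\rho S_2(t)$ and $\sqrt{1-\rho^2}\,S_2(t)$, mirroring the choice used for $\mathcal V_1$ (which had $S_1(t)$ in position $(1,1)$). Then $a+\varepsilon\gamma$ corresponds to replacing $\sigma_2$ by $\sigma_2+\varepsilon$ in the $S_2$-dynamics, and the uniform ellipticity condition (\ref{Ass1}) is inherited from that of $a$ for $|\varepsilon|$ sufficiently small.

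Next, I would read off the variation process $Z^{\varepsilon=0}$ from (\ref{po}). Since neither $S_1$ nor $V$ is touched by this perturbation, the first and third components of $Z^{\varepsilon=0}$ vanish identically, and only the second component $Z_2$ carries information. It satisfies a linear SDE with additive forcing $\rho S_2(t)\,dW_1(t)+\sqrt{1-\rho^2}\,S_2(t)\,dW_2(t)$, and applying It\^o's formula to the ratio $Z_2/S_2$ (exactly as for $Z_1/S_1$ in the proof of Proposition \ref{ha}) cancels all $Z_2$-dependent drift and diffusion, yielding
\[ Z_2(T)=S_2(T)\Bigl(\rho W_1(T)+\sqrt{1-\rho^2}\,W_2(T)-\sigma_2\int_0^T\sqrt{V(t)}\,dt\Bigr). \]
Via $B(T)=Y^{-1}(T)Z^{\varepsilon=0}(T)$, the only nonzero component of $B(T)$ is the one picked out by the second column of $Y^{-1}(T)$; in particular $B(0)=0$, so the third summand in the formula of Proposition \ref{zzz3} drops out. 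The Malliavin derivative $D_tB(T)$ is then computed via the chain rule (Proposition \ref{Cha}) and Lemma \ref{12}; the only delicate term is the derivative of $\sqrt{V(s)}$, but since $V$ is driven by a Brownian motion independent of $W_1$ and $W_2$, many cross contributions vanish and the trace $\mathrm{Tr}\bigl((D_tB(T))\,a^{-1}(S(t))Y(t)\bigr)$ collapses to $1/(\sigma_2\sqrt{1-\rho^2}\sqrt{V(t)})$.

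Substituting these ingredients into Proposition \ref{zzz3} with $\widetilde\alpha(t)=1/T$ on the single interval $[0,T]$, the Skorohod integral $\delta\bigl(a^{-1}(S(\cdot))Y(\cdot)\widetilde B(\cdot)\bigr)$ decomposes into the product of the nonzero component of $B(T)$ with an It\^o integral of the second column of $(a^{-1}(S(t))Y(t))^*$ against $d\mathbb W(t)$, minus the integral of the trace computed above. Collecting these produces the three-piece expression for $\pi^{\mathcal V_2}$ claimed in the statement. The main obstacle is the algebraic bookkeeping in this last step: the second column of $(a^{-1}Y)^*$ mixes $Y^{12}$- and $Y^{22}$-dependent entries, and one must carefully track how the factor $S_2(t)$ coming from the $Y^{22}/S_2$ term combines with the $x_2$ arising from $Y(0)^{-1}$ to produce the three stochastic integrals displayed; no new analytic difficulty beyond that already encountered in the proof of Proposition \ref{ha} arises, and integrability of $a^{-1}(S)Y\widetilde B$ in $\mathrm{Dom}(\delta)$ follows from the explicit form of $B$ and the uniform ellipticity assumption.
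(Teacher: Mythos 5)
Your overall strategy coincides with the paper's: the paper offers no written proof for this proposition beyond the remark that it follows by ``a similar procedure'' to the one for $\mathcal V_1$ (Proposition \ref{ha}), and your proposal is exactly that procedure with the perturbation moved to the second asset. Your intermediate computations of $Z_2$, of $B(T)=Y^{-1}(T)Z(T)$, and the observation that $B(0)=0$ kills the third summand in Proposition \ref{zzz3} are all sound for the $\gamma$ you chose.

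The gap is in the final assembly, which you defer to ``algebraic bookkeeping'': carried out, it does not produce the stated weight. With your $\gamma$ (second row $(\rho S_2(t),\sqrt{1-\rho^2}\,S_2(t),0)$ against the independent drivers $(W_1,W_2)$), the It\^o cross-variation correction in $d(Z_2/S_2)$ is $-\sigma_2\sqrt{V(t)}\,\bigl(\rho^2+(1-\rho^2)\bigr)dt=-\sigma_2\sqrt{V(t)}\,dt$, so the first factor comes out as $\rho W_1(T)+\sqrt{1-\rho^2}\,W_2(T)-\sigma_2\int_0^T\sqrt{V(t)}\,dt$, whereas the proposition asserts the drift $-(\rho+\sqrt{1-\rho^2})\sigma_2\int_0^T\sqrt{V(t)}\,dt$; these agree only for $\rho\in\{0,1\}$. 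Likewise your claimed trace is inconsistent with your own $B$: the second row of $D_tB(T)$ begins $(x_2\rho,\;x_2\sqrt{1-\rho^2},\;\ast)$, the second column of $a^{-1}(S(t))Y(t)$ is $\bigl(0,\;\tfrac{1}{\sigma_2\sqrt{1-\rho^2}\sqrt{V(t)}\,x_2},\;0\bigr)^{*}$ once the $Y^{12}$ cross terms cancel (indeed $Y^{12}=\partial S_1/\partial x_2=0$ here), so $\mathrm{Tr}\bigl((D_tB(T))a^{-1}(S(t))Y(t)\bigr)=1/(\sigma_2\sqrt{V(t)})$ --- the factors $\sqrt{1-\rho^2}$ cancel rather than survive, contradicting the last term $\int_0^T dt/(\sigma_2\sqrt{1-\rho^2}\sqrt{V(t)})$ in the statement. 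Choosing instead $\gamma$ with second row $(0,S_2(t),0)$ recovers that trace but changes the first factor to $W_2(T)-\sigma_2\sqrt{1-\rho^2}\int_0^T\sqrt{V(t)}\,dt$, still not the displayed one. So no single choice of perturbation direction in your argument yields the formula as written; you need either to fix the target expression or to exhibit explicitly the $\gamma$ for which your three ingredients genuinely combine into it, rather than asserting that they do.
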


\section{Localised Malliavin approach for spread options}
Following Fourni$\acute{\text{e}}$ et al. \cite{Fo}, the variance reduction is achieved by using a localized Malliavin technique. The approach is to localise the Malliavin weights round the strike price $K$, that is, instead of using the Malliavin calculus approach to derive the Greeks globally, the calculus is only applied locally around the singularities of the payoff function. In our case we localise the payoff function $\Phi(s_1,s_2)$ around $s_2-s_1=K$. Direct methods can be used outside the localisation point.\\
To be precise we introduce the Lipchitz continuous approximation to the Heaviside function:
\begin{displaymath}
H_a(s_1,s_2)=\left\{\begin{array}{lll} & 0\quad\quad\quad\quad\quad\quad\quad~\text{if}~~s_2-s_1<K-a,\\
&  \frac{s_2-s_1-(K-a)}{2a}\quad\text{if}~~K-a\leq s_2-s_1\leq K+a,\\
& 1\quad\quad\quad\quad\quad\quad\text{if}~~s_2-s_1>K+a.
\end{array}\right.
\end{displaymath}
for a constant $a$. We set
\begin{displaymath}
h_a(s_1,s_2)=\int_{-\infty}^{s1\wedge s_2}H_a(y_1,y_2)dy=\left\{\begin{array}{lll} & 0\quad\quad\quad\quad\quad\quad\quad\text{if}~~s_2-s_1<K-a,\\
&  \frac{(s_2-s_1-(K-a))^2}{4a}\quad\text{if}~~K-a\leq s_2-s_1\leq K+a,\\
& s_2-s_1-K\quad\quad\text{if}~~~s_2-s_1>K+a.
\end{array}\right.
\end{displaymath}
We observe that $h_a^{\prime}(s_1,s_2)=H_a(s_1,s_2)$. \\
Set
\[ \Phi_a(s_1,s_2)=\Phi(s_1,s_2)-h_a(s_1,s_2) = (s_2-s_1-K)^+-h_a(s_1,s_2).\]
Notice that $\Phi_a(s_1,s_2)$ vanishes for $s_2-s_1<K-a$ and for $s_2-s_1\geq K+a$. This means that $\Phi_a(s_1,s_2)$ is a localised version of $\Phi(s_1,s_2)$.\\ We can, therefore, write
\[\Phi(S_1(T),S_2(T))=\Phi_a(S_1(T),S_2(T))+h_a(S_1(T),S_2(T)),\]
so that the price of the spread call option is given by
\[ u=\mathbb E[e^{-rT}\Phi_a(S_1(T),S_2(T))]+\mathbb E[e^{-rT}h_a(S_1(T),S_2(T))].\]
We illustrate how the localised Malliavin calculus approach is applied to derive the expression for $\Delta_1$ and $\Gamma_1$. We have
\begin{eqnarray*}
\Delta_1&=&\frac{\partial}{\partial x_1}\mathbb E[e^{-rT}\Phi_a(S_1(T),S_2(T))]+\frac{\partial}{\partial x_1}\mathbb E[e^{-rT}h_a(S_1(T),S_2(T))]\\
&=& \mathbb E[e^{-rT}\Phi_a(S_1(T),S_2(T))\pi^{\Delta_1}]+\mathbb E[e^{-rT}H_a(S_1(T),S_2(T))Y_1(T)],
\end{eqnarray*}
with $\pi^{\Delta_1}$ given by (\ref{mi}), where the second equality is due to the application of  Proposition \ref{La} on the first term and direct differentiation on the second term. A similar procedure can be applied to obtain $\Delta_2$.\\
For $\Gamma_1$, we have
\begin{eqnarray*}
\Gamma_1&=&\frac{\partial^2}{\partial x_1^2}\mathbb E[e^{-rT}\Phi_a(S_1(T),S_2(T))]+\frac{\partial^2}{\partial x_1^2}\mathbb E[e^{-rT}h_a(S_1(T),S_2(T))]\\
&=& \mathbb E[e^{-rT}\Phi_a(S_1(T),S_2(T))\pi^{\Gamma_1}]+\frac{1}{x_1^2}\mathbb E[e^{-rT}H_a^{\prime}(S_1(T),S_2(T)) S_1^2(T)],
\end{eqnarray*}
with $\pi^{\Gamma_1}$ given by (\ref{mi1}), where the second equality is due to the application of Proposition \ref{far} on the first term and direct differentiation on the second term. A similar procedure can be applied to obtain $\Gamma_2$.\\

\section{Concluding remarks}
In this paper, tractable formulae for price sensitivities of spread options are presented using Malliavin calculus approach. Precisely, the formulae are presented for asset dynamics driven by geometric Brownian motion and stochastic volatility models of the financial markets. In order to apply the Malliavin calculus approach, the joint characteristic function of the underlying assets is not required. In addition, the Malliavin calculus approach avoids the direct differentiation of the payoff functional. Computing price sensitivities of spread options via the tractable formulae obtained guarantees a convergence rate that is independent of the regularity of the payoff function and dimensionality \cite{Ban}. The results obtained in this paper form a generalization of the computation of price sensitivities for spread options. The price sensitivities are expressed in terms of the expectation of spread option payoff multiplied with some Malliavin weights which are functions of the underlying asset expressed as stochastic integrals. This is in agreement with results in Fourni$\acute{\text{e}}$ et al. \cite{Fo}. The Malliavin weight functions are independent of the payoff functional, this is suitable for Monte Carlo methods which can be applied for general spread options, and not specifically for each spread option. The use of localised Malliavin approach help to reduce the variance when the Monte Carlo methods are applied by localising the Malliavin calculus around the point od discontinuity. Price sensitivities are valuable to investors as well as financial institutions  as they are used to find and construct financial risk strategies to hedge against potential sources of the underlying price risk. It remains to future research to consider spread options driven by general L\'evy models. It would also be interesting to consider the effect of stochastic correlation in the model.
%The \emph{delta} of the asset price $S_1(t)$ can be interpreted as the delta of a the call option while the \emph{delta} of the asset price $S_2(t)$ can be interpreted as the delta of the put option.

\subsection*{Acknowledgment}
This work was supported in part by the National Research Foundation of South Africa (Grant Number: 105924).


\begin{thebibliography}{12}
\bibitem{Fo} Fournie, E., Lasry, J., Lebuchoux, J., Lions, P. and Touzi, N. $(1999)$. Application of Malliavin Calculus to Monte-Carlo Method in Finance. \textit{Finance and Stochastics}, 3(4), 391 - 412.
\bibitem{Alf} Alfeus, M. and Schl$\ddot{\text{o}}$gl, E. $(2018)$. On numerical methods for spread options. Quantitative Finance Research Centre.  http://www.qfrc.uts.edu.au/
 \bibitem{Bel} Van Belle, J., Vanduffelm S. and Yao, J. $(2019)$. Closed-form approximations for spread options in L\'evy markets. Appl.. Stochastic Models Bus. Ind., 35, 732 - 746.
\bibitem{Carmona} Carmona, R. and Durrleman, V. $(2003)$. Pricing and hedging spread options. \emph{SIAM Review,} 45(4), 627 - 685.
\bibitem{Eydeland} Eydeland, A. and Wolyniec, K. $(2003)$. Energy and Power Risk Management: New Developments in Modelling, Pricing and Hedging. John Wiley and Sons, New York.
      \bibitem{Hurd} Hurd, T. and Zhou, Z. $(2010)$. A Fourier transform method for spread option pricing.
\emph{SIAM J. Financial Math.} 1(1), 142–157.
    \bibitem{Errera} Errera, S. and Brown, L. $(1999)$. Fundamentals of Trading Energy Futures and Options, PennWell, Tulsa, OK.
  \bibitem{Gui} de Souza, G.C.U.I., Samanez, C.P, and Bisso, C.R.S. $(2013)$. Analysis and valuation of European-style
and American-style exchange and spread options: The Brazilian case. \textit{R. Bras. Eco. de Emp.}, 13(1), 7 - 36.
\bibitem{Mh} Mhlanga, F.J. $(2011)$ \textit{Computation of Greeks using Malliavin calculus}. University of Cape Town. April $1$.
\bibitem{Kik} Kirk, E. $(1995)$. Correlation in the energy markets. \emph{In Managing energy price risk} (1st ed., pp. 71–78).  London: Risk Publications.
\bibitem{Mag} Margrabe, W. The value of an option to exchange one asset for another. \emph{The Journal of Finance}, 33(1), 177–186, $1978$.
\bibitem{Li} Li, M., Deng, S-J., and Zhou, J. $(2008)$. Closed-form approximations for spread option prices and greeks. \emph{The Journal of Derivatives,} 15(3), 58–80.

 \bibitem{Dem} Dempster, M. A. H., and Hong, S. S. G. $(2002).$ Spread option valuation and the fast fourier transform. \emph{In Mathematical Finance-Bachelier
Congress} 2000 (pp. 203–220). Berlin, Heidelberg: Springer.
\bibitem{El} El-Khatib Y. and Privault N. $(2004)$ Computations of Greeks in a market with jumps via the Malliavin calculus. \textit{Finance and Stochastics}. $8$, $161-179$.
\bibitem{Joh} Davis M.H.A. and Johansson M.P. $(2006)$ Malliavin Monte Carlo
Greeks for jump diffusions. \textit{Stochastic Process and their
Applications}. $116$, $101-129$.

    \bibitem{Pet} Petrou E. $(2008)$ Malliavin calculus in L\'evy spaces and
applications in finance. \textit{Electronic Journal of Probability}.
$3$, $852-879$.
\bibitem{Ben} Benth, F.E., Di Nunno, G., and Khedher, A. $(2011)$ Robustness of option prices and their deltas in markets modelled by jump-diffusions. \emph{Comm. Stochastic Analysis,}  5(2), 285 - 307.
\bibitem{Mh1} Mhlanga, F.J. $(2015)$. Calculations of Greeks for jump diffusion
Processes. \emph{Mediterr. J. Math.} 12, 1141 - 1160.

%\bibitem{Ben} Benth, F.E., Dahl,  L.O., and Kalsen,  K.H. $(2003)$. Quasi Monte-Carlo evaluation of sensitivities of options in commodity and energy markets.  \emph{Int. J. Theor. Appl. Finance,}  6(8), 865-884.
 \bibitem{Yil} Yilmaz, B. $(2018)$ Computation of option Greeks under hybrid stochastic volatility models via Malliavin calculus. \emph{Modern Stochastics: Theory and Applications}, 5(2), 145 - 165.

\bibitem{Kaw} Kawai, R. and Kohatsu-Higa, A. $(2010)$. Computation of Greeks and multidimensional density estimation for asset price models with time-changed Brownian motion. \emph{Applied Mathematical Finance,} 17(4), 301-321.
\bibitem{N} Nualart, D. $(2006)$. \textit{The Malliavin Calculus and Related Topics}. Probability and its Applications (New York). Springer-Verlag, Berlin, Second Edition.
\bibitem{Ban} Ba$\tilde{\text{n}}$os, D., Meyer-Brandis, T., Proske, F., and Duedahl, S. $(2017)$. Computing deltas without derivatives. \emph{Finance Stoch.}, 21, 509 - 549.
%\bibitem{Fournie1}Fourni\'e, E; Lasry, J. M.; Lebuchoux, J; Lions, P. L.; Touzi, N.
%Applications of Malliavin calculus
%to Monte-Carlo methods in finance. 1999, {\it Finance Stochastic} 3, 391-412.
%\bibitem{Fournie2} Fourni\'e, E; Lasry, J. M.; Lebuchoux, J; Lions, P. L.
%Applications of Malliavin calculus
%to Monte-Carlo methods in finance.  {\it Finance Stochastic}. 2001, 5, 201-236.
\end{thebibliography}
 \end{document}